%
%
%
%


\documentclass[paper=a4, fontsize=11pt]{scrartcl} 

\usepackage[T1]{fontenc} 
\usepackage{fourier} 
\usepackage[english]{babel} 
\usepackage{amsmath,amsfonts,amsthm} 
\usepackage{amsmath, calligra, mathrsfs}
\usepackage{amssymb}
\usepackage[mathscr]{euscript}
\usepackage{verbatim}
\usepackage[dvipsnames]{xcolor}
\usepackage{mdframed} 
\usepackage{soulutf8}
\usepackage{stmaryrd}
\usepackage{tikz-cd}
\usepackage{hyperref}
\usepackage{lipsum} 

\usepackage{sectsty} 
\allsectionsfont{\centering \normalfont\scshape} 

\usepackage{fancyhdr} 
\usepackage{xurl}
\pagestyle{fancyplain} 
\fancyhead{} 
\fancyfoot[L]{} 
\fancyfoot[C]{} 
\fancyfoot[R]{\thepage} 
\newcommand*{\sheafhom}{\mathcal{H}\kern -.5pt om}
\setlength{\headheight}{13.6pt} 

\numberwithin{equation}{section} 
\numberwithin{figure}{section} 
\numberwithin{table}{section} 

\newtheorem{thm}{Theorem}[section]
\newtheorem{cor}[thm]{Corollary}
\newtheorem{prop}[thm]{Proposition}
\newtheorem{obs}[thm]{Observation}
\newtheorem{lem}[thm]{Lemma}

\newtheorem{conj}[thm]{Conjecture}
\newtheorem{quest}[thm]{Question}
\theoremstyle{definition}
\newtheorem{defn}[thm]{Definition}

\newtheorem{exmp}[thm]{Example}

\theoremstyle{remark}
\newtheorem{rem}[thm]{Remark}

\DeclareMathOperator{\Span}{Span}

\DeclareMathOperator{\rk}{rank}

\setlength\parindent{0pt} 


\newcommand{\horrule}[1]{\rule{\linewidth}{#1}} 

\title{	
	\normalfont \normalsize 
	\textsc{} \\ [25pt] 
	\horrule{0.5pt} \\[0.4cm] 
	\huge Matroids satisfying the matroidal Cayley--Bacharach property and ranks of covering flats

	\horrule{2pt} \\[0.5cm] 
}

\author{Soohyun Park} 

\date{\normalsize May 6, 2022} 

\begin{document}

	\maketitle 
	
	\begin{abstract}
		\noindent Let $M$ be a matroid satisfying a matroidal analogue of the Cayley--Bacharach condition. Given a number $k \ge 2$, we show that there is no nontrivial bound on ranks of a $k$-tuple of flats covering the underlying set of $M$. This addresses a question of Levinson--Ullery motivated by earlier results which show that bounding the number of points satisfying the Cayley--Bacharach condition forces them to lie on low-dimensional linear subspaces. We also explore the general question what matroids satisfy the matroidal Cayley--Bacharach condition of a given degree and its relation to the geometry of generalized permutohedra and graphic matroids. 
	\end{abstract}
	
	\section{Introduction}
	 
	A finite subset $\Gamma \subset \mathbb{P}^n$ satisfies the Cayley--Bacharach condition of degree $r$ if a homogeneous polynomial of degree $r$ vanishing on all but one point of $\Gamma$ vanishes on all of $\Gamma$. In recent work, Levinson--Ullery \cite{LU} show that a finite subset $\Gamma \subset \mathbb{P}^n$ satisfying the Cayley--Bacharach condition of degree $r$ is covered by low-dimensional linear subspaces if $|\Gamma|$ is not very large compared to $r$ (Theorem 1.3 on p. 2 of \cite{LU}). The result was motivated by constructions relating to degrees of irrationality of smooth complete intersections. \\  
	
	More specifically, the varieties $X$ considered as motivating examples are those with generically finite dominant rational maps $X \dashrightarrow \mathbb{P}^n$ connected to (a generalization of) the Cayley--Bacharach property and special configurations of points. It is known that the generic fiber of the rational map $X \dashrightarrow \mathbb{P}^n$ satisfies the Cayley--Bacharach property with respect to the linear system $|K_X|$ (replacing homogeneous degree $r$ polynomials by sections of $K_X$). If $K_X$ is sufficiently positive, then the fibers also lie in special positions. For example, a result of Bastianelli--Cortini--De Poi (Theorem 1.1 on p. 2 of \cite{LU}) states that a finite subset $\Gamma \subset \mathbb{P}^n$ satisfying the degree $r$ Cayley--Bacharach property of degree $r$ such that $|\Gamma| \le 2r + 1$ lies on a line. The results of Levinson--Ullery (Theorem 1.3 on p. 2 of \cite{LU}) are analogues which show that $\Gamma$ still lies on a \emph{union} of low-dimensional linear subspaces when we impose a weaker linear upper bound in $r$ on the size of $\Gamma$. They are part of a more general conjectured statement which is listed below along with the result. \\

	\begin{conj} \label{lincov} (Levinson--Ullery, Conjecture 1.2 on p. 2 of \cite{LU}) \\
		Let $\Gamma \subset \mathbb{P}^n$ be a finite set of points satisfying $CB(v)$. If $|\Gamma| \le (d + 1)v + 1$, then $\Gamma$ can be covered by a union of positive--dimensional linear subspaces $P_1 \cup \cdots \cup P_k$ such that $\sum_{i = 1}^k \dim P_i = d$. \\
	\end{conj}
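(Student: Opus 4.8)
The plan is to recast the conjecture as a sharp lower bound on $|\Gamma|$ and to prove that bound by induction on the Cayley--Bacharach degree $v$, peeling off one hyperplane at a time. First I would record the basic \emph{residuation} (linkage) step: if $\Gamma$ satisfies $CB(v)$ and $H$ is any hyperplane with defining form $\ell_H$, then $\Gamma' := \Gamma \setminus H$ satisfies $CB(v-1)$. Indeed, if $g$ is a form of degree $v-1$ vanishing on all of $\Gamma'$ but one point $p \notin H$, then $\ell_H \cdot g$ has degree $v$ and vanishes on $\Gamma \setminus \{p\}$, so by $CB(v)$ it vanishes at $p$; since $\ell_H(p) \neq 0$ we get $g(p) = 0$. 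Next I would reformulate the statement. For a finite set $\Gamma$, let $c(\Gamma)$ be the minimum of $\sum_i \dim P_i$ over all covers $\Gamma \subseteq P_1 \cup \cdots \cup P_k$ by positive--dimensional linear subspaces. A short contrapositive argument shows that Conjecture~\ref{lincov} is equivalent to the single inequality $|\Gamma| \ge c(\Gamma)\cdot v + 2$ holding for every nonempty $\Gamma$ satisfying $CB(v)$: the instance $d = c(\Gamma)-1$ of the conjecture is exactly this inequality, while conversely the inequality forces $c(\Gamma) \le d$ whenever $|\Gamma| \le (d+1)v+1$. (The Bastianelli--Cortini--De Poi theorem is the case $d = 1$, and the Levinson--Ullery theorem treats small values of $d$.)

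With the inequality as the target, I would induct on $v$. The base case $v = 1$ asks that $CB(1)$ imply $|\Gamma| \ge c(\Gamma) + 2$; this follows from the elementary fact that a hyperplane meeting a positive--dimensional subspace in enough points must contain it, combined with the definition of $c(\Gamma)$. For the inductive step, choose a hyperplane $H$, set $\Gamma' = \Gamma \setminus H$, $D = c(\Gamma)$, $D' = c(\Gamma')$ and $\delta = D - D' \ge 0$. Residuation and the inductive hypothesis give $|\Gamma'| \ge D'(v-1) + 2$, so
\[
 |\Gamma| = |\Gamma \cap H| + |\Gamma'| \ge |\Gamma \cap H| + D'(v-1) + 2 .
\]
Hence the induction closes \emph{provided} we can locate a hyperplane $H$ with
\[
 |\Gamma \cap H| \ge \delta\, v + (D - \delta),
\]
i.e.\ a hyperplane whose point count compensates exactly for the drop $\delta$ it causes in covering complexity.

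To produce such a hyperplane I would fix an optimal cover $P_1 \cup \cdots \cup P_k$ with $\sum_i \dim P_i = D$ and take $H \supseteq P_1$, where $e_1 = \dim P_1$. Since $P_1 \subseteq H$, the residual set is covered by $P_2 \cup \cdots \cup P_k$, so $\delta \ge e_1$; choosing the cover and $H$ carefully one expects $\delta = e_1$, in which case the required bound becomes $|\Gamma \cap H| \ge e_1 v + (D - e_1)$, which would follow from a concentration estimate $|\Gamma \cap P_1| \gtrsim e_1 v$ on the points lying on the peeled subspace. This concentration estimate is the heart of the matter and the step I expect to be the main obstacle: the set $\Gamma \cap P_1$ need not satisfy $CB(v)$ inside $P_1$, because the forms witnessing the Cayley--Bacharach property of $\Gamma$ involve the points outside $P_1$ as well, so one must work instead with a \emph{relative} (residual) Cayley--Bacharach condition on $\Gamma \cap P_1$ and control the interaction with those subspaces $P_j$ that straddle $H$.

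As a complementary line of attack on this crux — useful at least for the base of the induction and for the extremal configurations — I would use the product--of--hyperplanes characterization: $\Gamma$ \emph{fails} $CB(v)$ exactly when there exist $v$ hyperplanes and a point $p_0 \in \Gamma$ with $\Gamma \setminus \{p_0\} \subseteq \bigcup_{i=1}^v H_i$ and $p_0 \notin \bigcup_{i=1}^v H_i$. Assuming $|\Gamma| \le c(\Gamma)\,v + 1$, one would then try to partition $\Gamma \setminus \{p_0\}$ into at most $v$ blocks of at most $c(\Gamma)$ points, each block spanning a proper subspace not containing $p_0$ and hence contained in a hyperplane avoiding $p_0$; this directly violates $CB(v)$ and proves the contrapositive. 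The difficulty here is to guarantee such a balanced partition compatible with the value $c(\Gamma)$ and with the general--position requirement that each block span avoid $p_0$. Degenerate configurations, where many points pile onto a single low--dimensional subspace, are precisely the cases that make both the partition and the concentration estimate delicate, and reconciling the two approaches is what I expect to require the most new input.
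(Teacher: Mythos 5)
First, a point of framing: the statement you are proving is Conjecture~\ref{lincov}, i.e.\ Conjecture~1.2 of Levinson--Ullery \cite{LU}, which this paper does \emph{not} prove and which is open in general; only the cases recorded in Theorem~\ref{levullori} ($v \le 2$ and all $d$; all $v$ and $d \le 3$; $(d,v) = (4,3)$) are known. So there is no proof in the paper to compare yours against, and any complete blind proof would be new mathematics. Judged on its own terms, your proposal contains correct and standard ingredients but does not close. The residuation lemma (multiply a degree-$(v-1)$ form by $\ell_H$ to pass from $CB(v)$ on $\Gamma$ to $CB(v-1)$ on $\Gamma \setminus H$) is correct and is in fact the basic tool in the Levinson--Ullery arguments, and your reformulation of the conjecture as the single inequality $|\Gamma| \ge c(\Gamma)\,v + 2$ is a valid contrapositive rewriting (modulo the harmless padding needed to make $\sum_i \dim P_i$ exactly $d$). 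But the crux of your induction --- finding a hyperplane $H$ with $|\Gamma \cap H| \ge D + \delta(v-1)$, which you correctly reduce to a concentration estimate $|\Gamma \cap P_1| \gtrsim e_1 v$ --- is exactly the step you leave open, and it is exactly where the known proofs collapse into delicate case analysis that only succeeds for small $d$. You have identified the obstacle accurately, but identifying it is not overcoming it; as written this is a research plan, not a proof.

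There is also one outright error. Your ``product--of--hyperplanes characterization'' --- that $\Gamma$ fails $CB(v)$ \emph{exactly} when $v$ hyperplanes cover $\Gamma \setminus \{p_0\}$ while avoiding $p_0$ --- is false as a biconditional, because a degree-$v$ form witnessing failure need not factor into linear forms. For instance, take five points on an irreducible conic in $\mathbb{P}^2$ together with a point $p_0$ off the conic: the conic vanishes on the five points but not at $p_0$, so $CB(2)$ fails, yet no two lines cover the five points (each line meets the irreducible conic in at most two of them). Your contrapositive argument only uses the valid direction (a hyperplane cover implies failure), but then its engine --- partitioning $\Gamma \setminus \{p_0\}$ into at most $v$ blocks of at most $c(\Gamma)$ points whose spans avoid $p_0$ --- is unproven and can genuinely fail, e.g.\ when many points of $\Gamma$ are collinear and $p_0$ lies in the span of every candidate block; note that collinear $\Gamma$ with $|\Gamma| = v+2$ is precisely the extremal case where the target inequality is sharp. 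Finally, a cautionary signal from this very paper: Theorems~\ref{mcbdimnobd} and~\ref{nobdmcb} show that the direct matroidal translation of Conjecture~\ref{lincov} is \emph{false}, so any successful proof must exploit genuinely geometric input (positivity and factorization properties of forms) rather than only the combinatorics of covers --- which is a structural warning against your second, purely set-partitioning line of attack.
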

	
	\begin{thm} \label{levullori} (Levinson--Ullery, Theorem 1.3 on p. 2 of \cite{LU}) \\
		Conjecture \ref{lincov} holds in the following cases:
		
		\begin{enumerate}
			\item For all $v \le 2$ and all $d$. Moreover, we can take $k = 1$.
			
			\item For all $v$ and for $d \le 3$. Moreover, we may take $k \le 2$.
			
			\item For $d = 4$ and $v = 3$. Moreover, we may take $k \le 2$.
		\end{enumerate}
	\end{thm}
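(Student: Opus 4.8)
The plan is to recast the condition $CB(v)$ in terms of the Hilbert function and then to split the theorem into a \emph{spanning bound}, which yields part (1) with $k=1$, and a \emph{decomposition} argument, which yields parts (2) and (3) with $k \le 2$. Write $R = \mathbb{C}[x_0,\dots,x_n]$ and let $\mathrm{ev}_v \colon R_v \to \mathbb{C}^{\Gamma}$ evaluate degree-$v$ forms at the points. One checks directly that $\Gamma$ satisfies $CB(v)$ exactly when the image $W = \Ima(\mathrm{ev}_v)$ contains no nonzero vector supported at a single point, equivalently when deleting any one point of $\Gamma$ leaves the number of degree-$v$ conditions unchanged. Passing to a general Artinian reduction $A = (R/I_\Gamma)/\ell\,(R/I_\Gamma)$, whose Hilbert function is the first difference $\Delta h_\Gamma$, this becomes the statement that the socle of $A$ vanishes in every degree $\le v$. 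I would carry all three formulations through the argument.

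I would first settle part (1) via the bound that a configuration nondegenerate in $\mathbb{P}^m$ and satisfying $CB(v)$ with $v \le 2$ has $|\Gamma| \ge mv + 2$; since $(d+1)v + 1 < (d+1)v + 2$, this forces $\dim \Span \Gamma \le d$ whenever $|\Gamma| \le (d+1)v+1$, which is precisely the $k=1$ conclusion. For $v=1$ the bound is linear algebra: $CB(1)$ says each point lies in the span of the others, so the coordinate vectors form a dependent set with no coloop and hence $\dim \Span \Gamma \le |\Gamma| - 2$. For $v=2$ I would instead analyze $\Delta h_\Gamma = (1, m, \dots)$ using the vanishing of the socle of $A$ in degrees $1$ and $2$ together with Macaulay's growth theorem, which pins the Hilbert function down enough to give $\sum_t \Delta h_\Gamma(t) \ge 2m + 2$; the bound is sharp, being attained both by $2m+2$ points on a rational normal curve of degree $m$ and, in $\mathbb{P}^3$, by four points on each of two skew lines.

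For parts (2) and (3) the conclusion $k=1$ genuinely fails: for $v \ge 3$, taking $v+2$ points on each of two skew lines in $\mathbb{P}^3$ produces a nondegenerate $CB(v)$ configuration with only $2v+4 < 3v+2$ points, so no single $\mathbb{P}^d$ can contain it. I would therefore build the covering from two pieces by residuation and induction, with the Bastianelli--Cortini--De Poi theorem ($d=1$) as the base case. Choosing a hyperplane $H$ meeting $\Gamma$ in as many points as possible, the residual $\Gamma \setminus H$ satisfies $CB(v-1)$, while $\Gamma \cap H$ is a smaller $CB$-type configuration in $\mathbb{P}^{n-1}$; together with the spanning bound this confines $\Delta h_\Gamma$ to a short explicit list once $d \le 3$ (respectively $(d,v) = (4,3)$). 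For each admissible Hilbert function I would show that, up to the covering, $\Gamma$ decomposes as a join of at most two rational-normal-curve--type pieces lying on complementary subspaces $P_1, P_2$ with $\dim P_1 + \dim P_2 = d$, giving $k \le 2$; the critical configurations are exactly the two-skew-line joins above.

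The hard part is twofold. First, the $v=2$ growth estimate is delicate precisely at its extremal cases: excluding Hilbert functions with $\sum_t \Delta h_\Gamma(t) < 2m+2$ requires pushing Macaulay's inequality to equality under the degree-$\le 2$ socle condition, and the failure of the analogous estimate for $v \ge 3$ is exactly what forces $k > 1$. Second, and more seriously, in parts (2) and (3) one must control how the covering subspace of the residual $\Gamma \setminus H$ recombines with that of $\Gamma \cap H$: a priori their spans are unrelated, and forcing the two pieces onto \emph{complementary} linear subspaces whose dimensions sum to exactly $d$ — while keeping both positive-dimensional so that $k \le 2$ — is the crux, and the bookkeeping needed here is what currently restricts the method to small $d$.
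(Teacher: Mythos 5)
First, a point of order: the present paper does not prove this statement. Theorem \ref{levullori} is quoted verbatim from Levinson--Ullery \cite{LU} as background, so there is no in-paper proof to compare your attempt against; it has to be measured against the original source. At the level of strategy your outline for parts (2) and (3) does match theirs: \cite{LU} likewise works by residuation (if $\Gamma$ satisfies $CB(v)$ and $H$ is a hyperplane, then $\Gamma \setminus H$ satisfies $CB(v-1)$), induction with the Bastianelli--Cortini--De Poi theorem as the $d = 1$ base case, and a case analysis producing at most two covering subspaces; your two-skew-lines examples showing $k = 1$ fails for $v \ge 3$ are also the right critical configurations. For part (1), however, \cite{LU} argues elementarily with spans and residuals, whereas you route everything through Artinian reductions and Hilbert functions. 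That is a legitimate alternative in principle, but it is exactly where your sketch has a concrete gap.

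The gap: for $v = 2$ you claim that vanishing of the socle of $A$ in degrees $1$ and $2$, combined with Macaulay's growth theorem, forces $\sum_t \Delta h_\Gamma(t) \ge 2m + 2$. It does not. Macaulay's theorem only bounds $\Delta h_\Gamma$ from \emph{above}, and socle vanishing in degrees $\le 2$ only forces $\Delta h_\Gamma(2) > 0$ and $\Delta h_\Gamma(3) > 0$; the $h$-vector $(1, m, 1, 1)$ passes both tests yet sums to $m + 3$, far below $2m + 2$. What you actually need is a duality-type inequality such as $\sum_{t \ge 2} \Delta h_\Gamma(t) \ge \sum_{t \le 1} \Delta h_\Gamma(t)$ for $CB(2)$ schemes (a Geramita--Kreuzer--Robbiano / Eisenbud--Green--Harris-style statement tied to the canonical module), or else the direct span-counting argument of \cite{LU}. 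Similarly, in parts (2) and (3) you yourself flag that forcing the span of the residual $\Gamma \setminus H$ and the span of $\Gamma \cap H$ onto complementary subspaces whose dimensions sum to exactly $d$ is ``the crux,'' and you leave it undone; that recombination step is precisely the content of the case analysis in \cite{LU}. As written, the proposal is a plausible plan with correct extremal examples, but not a proof.
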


	Since many of the arguments used in the result of Levinson--Ullery (Theorem 1.3 on p. 2 of \cite{LU}) are combinatorial in a way that can sometimes be rewritten in terms of matroid theory (p. 14 of \cite{LU}), the authors define a matroid-theoretic analogue of the Cayley--Bacharach property.  \\
	
	\begin{defn} (p. 14 of \cite{LU}) \\
		A matroid $M$ with underlying set $E$ satisfies the \textbf{matroidal Cayley--Bacharach property of degree $a$ (denoted $MCB(a)$)} if, whenever a union of $a$ flats contains all but one point of $E$, the union contains the last point. In other words: \[ \bigcup_{i = 1}^a F_i \supset E \setminus p \Longrightarrow \bigcup_{i = 1}^a F_i = E \]  for any $p \in E$ and any flats $F_1, \ldots, F_a$ of $M$. We will work with matroids $M$ such that all falts of rank 1 have size 1 since flats of rank 1 correspond to a single point in $\mathbb{P}^n$.
	\end{defn}
	
	Note that the finite sets satisfying the Cayley--Bacharach property are represented by the underlying (finite) set of the matroid and the flats are analogous to linear subspaces. \\
		
	Using these flats of matroids, the authors ask whether an analogue of their main result holds for the matroidal Cayley--Bacharach property. We will discuss this question and a variant.
	
	\pagebreak 
	
	\begin{quest} \label{flatcover} ~\\
		\vspace{-3mm}
		\begin{enumerate}
			\item (Levinson--Ullery, Question 7.6 on p. 14 of \cite{LU})  Does the statement of Conjecture \ref{lincov} with $CB(v)$ replaced by $MCB(v)$ and dimensions of linear subspaces replaced by ranks of flats hold? Here is a more explicit statement: \\  
			
			Let $M$ be a matroid with underlying set  $E$ such that all flats of rank $1$ have size $1$. Suppose that $M$ satisfies $MCB(a)$ and  $|E| \le (d + 1)a + 1$. Let $d_i = r_i - 1$ if $r_i \ge 2$ and $d_i = 1$ if $r_i = 1$.  Is it possible to cover $M$ by a union of (possibly improper) flats $\bigcup_i F_i$ of ranks $r_i$ respectively such that $\sum_i d_i \le d$?
			
			\item We can consider a variant of the question in Part 1 since the original source refers to covering matroids by a union of flats of ``specified dimensions''. Let $M$ be a matroid of rank $r$ with underlying set $E$ of size $n$. Fix a positive integer $N = N(M)$. Suppose that $M$ satisfies $MCB(a)$. Must $M$ (meaning the underlying set $E$) be covered by a union of $\le N$ proper flats where at least one of the flats has rank $\le r - 2$?

		\end{enumerate}
	\end{quest}
	
	\begin{rem} ~\\
		\vspace{-3mm}
		\begin{enumerate}
			\item In Part 1, we replace ``dimensions'' $d_i$ in the original statement of Question 7.6 on p. 14 of \cite{LU} with $r_i - 1$ if $r_i \ge 2$, where $r_i$ is the rank of a flat $F_i$. If $r_i = 1$, we will take $d_i = 1$.  This is because the analogous geometric condition considers dimensions of spans of points in projective space and ``dimension'' does not seems to be a standard term for flats of a matroid unless we are discussing representable matroids. In the latter setting, the rank is equal to the dimension of the linear subspace spanned by the vectors corresponding to the points of the flat. Also, we consider both an interpretation of the problem using ranks of individual flats (for ``flats of specified ranks'' for Theorem \ref{nobdmcb}) and a direct analogue of Conjecture \ref{lincov} (Theorem \ref{mcbdimnobd}). \\
			
			\item The bounds on sizes of finite set $\Gamma$ (modeled by $E$ above) satisfying the (geometric) Cayely--Bacharach property in Theorem 1.1, Conjecture 1.2, and Theorem 1.3 on p. 2 of \cite{LU} are on the size of the finite set (analogous to $n = |E|$) relative to the degree (given by $a$ above). The dimension of a plane configuration is the sum of the dimension of the linear subspaces used to cover the finite set $\Gamma$ on p. 2 of \cite{LU}. \\
			
			\item For each of the questions above, we find a counterexample using a matroid satisfying $MCB(a)$ where the flats involved in the definition of $MCB(a)$ must be hyperplanes (i.e. maximal proper flats). \\
		\end{enumerate}
	\end{rem}
	
	In Section \ref{mcbans}, we find some examples of nontrivial flats satisfying the matroidal Cayley--Bacharach condition whose nontrivial covers by flats only use hyperplanes (i.e. maximal proper flats) (Example \ref{pavexmp}) when $N \le a$. Since this would mean taking $b_i = r - 1$ for all $i$ in Question \ref{flatcover}, this means that there is no nontrivial bound on the ranks of flats covering the ground set of a matroid satsifying the degree $a$ matroidal Cayley--Bacharach condition (Theorem \ref{nobdmcb}).  \\

	\begin{thm} (Theorem \ref{nobdmcb}) \\
		\vspace{-3mm}
		Take an even number $B \ge 2m + 2$ with $B | n$ and $\frac{n}{B} < B$.  Fix $k, a \le \frac{n}{B} + \frac{B}{m} - 3$. 
		
		\begin{enumerate}
			\item Let $M$ be a matroid of rank $m + 1$ satisfying $MCB(a)$ with underlying set $E$ of size $n$. Suppose that $F_1, \ldots, F_k$ is a $k$-tuple of flats covering $E$ with each proper flat of size at most $B$. There is no covering by $\le k$ flats where at least one of the flats has rank $\le r - 2$.  In other words, it is possible for all the flats $F_i$ to be hyperplanes. Question \ref{flatcover} contains an explanation of why this indicates that there is no ``nontrivial'' bound. \\ 
			
			\item In fact, the matroid from the proof of Part 1 gives a negative answer to Question \ref{flatcover} using the case $a \le \frac{n}{B} + \frac{B}{m} - 3$. More specifically, there is no upper bound on the ranks of a collection of $\le a$ flats which cover the underlying set $E$ of a matroid of rank $r$ satisfying $MCB(a)$. \\   
		\end{enumerate}
	\end{thm}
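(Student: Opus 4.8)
The plan is to prove both parts at once by exhibiting a single explicit paving matroid and reading every assertion off its hyperplane structure. I would partition the ground set $E$ into $N_0 = n/B$ pairwise disjoint blocks $B_1,\dots,B_{N_0}$, each of size $B$ (possible since $B \mid n$), and declare these blocks to be the nontrivial hyperplanes of a matroid $M$ of rank $m+1$. Because the blocks are disjoint they meet pairwise in $0 \le (m+1)-2$ points and each has size $B \ge 2m+2 \ge m+1$, so the conditions for the blocks to be the nontrivial hyperplanes of a paving matroid hold and $M$ is a genuine rank-$(m+1)$ matroid on $n$ elements. First I would record its flats: every set of size $\le m-1$ is an independent flat of that rank; the hyperplanes (rank $m = r-1$) are exactly the blocks together with the $m$-subsets meeting each block in fewer than $m$ points (the ``transversal'' hyperplanes); and $E$ is the only flat of rank $m+1$. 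The one structural fact driving everything is that any flat other than a block meets each block in at most $m-1$ points, since such an intersection is a proper subflat of a block and the proper subflats of a block are precisely the subsets of size $\le m-1$.

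Next I would verify $MCB(a)$. Fix $p \in E$, say $p \in B_1$, and let $a_0$ be the least number of flats needed to cover $E \setminus \{p\}$ while avoiding $p$; then $MCB(a)$ holds precisely when $a < a_0$. To bound $a_0$ from below I split a cover into block-flats and non-block flats: the block $B_1$ cannot be used (it contains $p$), and every non-block flat covers at most $m$ points in total and at most $m-1$ in any single block. Counting the points of the blocks not covered by a block-flat (always including $B_1 \setminus \{p\}$) against the $\le m$ points each non-block flat supplies yields $a_0 \ge \frac{n}{B} + \frac{B-1}{m} - 1$. Since $B \ge 2m+2$ gives $\frac{B-1}{m} > \frac{B}{m} - 1 > \frac{B}{m} - 2$, this is strictly larger than the allowed value $a \le \frac{n}{B} + \frac{B}{m} - 3$, so $a < a_0$ and $M$ satisfies $MCB(a)$.

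Now for the covering statements. The $N_0 = n/B$ blocks already cover $E$ and are all hyperplanes, so this is a cover by flats of rank $r-1$, which establishes that one may take every covering flat to be a hyperplane. For the lower bound I would restrict to irredundant covers, which is harmless because discarding a flat contained in the union of the others only lowers $\sum_i d_i$ while keeping a cover. Let such a cover use $\beta$ of the blocks as flats. If $\beta < N_0$, then some block is covered entirely by non-block flats, each meeting it in $\le m-1$ points, and the point/flat count forces a total of at least $\beta + (N_0 - \beta)\tfrac{B}{m} \ge (N_0 - 1) + \tfrac{B}{m} = \tfrac{n}{B} + \tfrac{B}{m} - 1$ flats, exceeding both $k$ and $a$. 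Hence any cover by $\le k$ (resp.\ $\le a$) flats must use all $N_0$ blocks; being irredundant it then consists of exactly the blocks, all of rank $r-1 = m$ and none of rank $\le r-2$. This proves Part 1, and since $m = r-1$ may be taken arbitrarily large while the forced minimum is $\sum_i d_i = N_0(m-1)$, it shows that no nontrivial bound on the ranks can hold, giving Part 2.

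The main obstacle is the covering lower bound in the last two steps: one must optimise over covers that freely mix blocks, transversal hyperplanes (a single one of which can pick up a point from each of several blocks at once) and small flats, and show that none beats the block partition by enough to slip a rank-$\le r-2$ flat in under the bound $\frac{n}{B}+\frac{B}{m}-3$. This is where the hypotheses $\frac{n}{B} < B$ and the evenness of $B$ are spent, since they control the packing estimate and underlie the explicit cover in Example~\ref{pavexmp}; it is also where the passage to irredundant covers must be handled carefully, so that the literal ``no low-rank flat'' conclusion is not defeated by simply adjoining redundant flats when $B$ is large relative to $m$.
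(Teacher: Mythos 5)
Your proposal is correct and follows essentially the same route as the paper: the same paving matroid (the $n/B$ blocks of size $B$ as hyperplanes, completed by the transversal $m$-subsets), the same counting argument showing that a union of at most $a \le \frac{n}{B} + \frac{B}{m} - 3$ hyperplanes failing to cover $E$ must miss at least $2m \ge 2$ elements, and the same observation that any cover of $E$ by at most $a$ flats is forced to use all the blocks. Your write-up is somewhat more explicit about the optimization over the number $\beta$ of blocks used and about restricting to irredundant covers, but the construction and the key estimates coincide with the paper's.
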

	
	\begin{rem}
		The counterexamples we study have some recursive properties regarding the matroidal Cayley--Bacharach property and \emph{some} upper bound is required in order for $MCB(a)$ to be satisfied (Proposition \ref{pavmcbcrit}). \\
	\end{rem}
	
	The example above also implies a direct translation of Conjecture 1.2 on p. 2 of \cite{LU} does \emph{not} hold.

	\begin{thm} (Theorem \ref{mcbdimnobd}) \\
		There is a matroid $M$ satisfying $MCB(a)$ with ground set $E$ ($n:= |E|$) such that there is some $d$ such that $n \le (d + 1)a + 1$ but $E$ \emph{cannot} be covered by a union of flats of total rank $d$. In other words, we have that $\bigcup_{i = 1}^k F_i = E \Longrightarrow \sum_{i = 1}^k \rk F_i \ge d + 1$. 
	\end{thm}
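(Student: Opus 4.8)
The plan is to reuse the matroid constructed in the proof of Theorem~\ref{nobdmcb} (the paving matroid of Example~\ref{pavexmp}) and to extract from it the failure of the numerical inequality that a literal matroidal translation of Conjecture~\ref{lincov} would predict. Let $M$ be that matroid: it has rank $m+1$, ground set $E$ with $|E| = n$, satisfies $MCB(a)$, and is assembled from a partition of $E$ into size-$B$ rank-$m$ flats. I work with $\sum_i \rk F_i$ directly, since the theorem is phrased with ranks rather than the shifted quantities $r_i - 1$, and I cover $E$ by \emph{proper} flats, as otherwise $F_1 = E$ covers trivially with $\sum_i \rk F_i = m+1$.

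First I would record the structural facts about proper flats of a rank-$(m+1)$ paving matroid. Since every circuit has size at least $m+1$ and hence rank at least $m$, a flat of rank $j \le m-1$ contains no circuit, so it is independent and has exactly $j$ elements; and a flat of rank $m$ is either one of the size-$B$ blocks or an independent $m$-set, so every hyperplane has size at most $B$. Thus a hyperplane covers at most $B$ points at a rank cost of $m$, while a flat of rank $j < m$ covers exactly $j$ points at rank cost $j$.

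Next I would prove the key lower bound, that every cover of $E$ by proper flats satisfies $\sum_i \rk F_i \ge (n/B)\,m$. This is a one-line exchange argument. Writing $h$ for the number of hyperplanes used and $S$ for the summed rank of the remaining (independent) flats, coverage forces $hB + S \ge n$, while the total rank is $hm + S$; on the binding constraint this equals $n + h(m-B)$, which, since $B \ge 2m+2 > m$, is smallest when $h$ is largest, namely $h = n/B$ and $S = 0$ (using $B \mid n$). Hence the minimum total rank is exactly $(n/B)\,m$, realized by the partition into $n/B$ blocks that is singled out by Theorem~\ref{nobdmcb}.

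Finally I would set $d = (n/B)\,m - 1$. The lower bound then gives $\sum_i \rk F_i \ge (n/B)\,m = d+1$ for every proper-flat cover, so the translated conjecture's conclusion fails for this $d$, and it remains only to verify the hypothesis $n \le (d+1)a + 1$, i.e.\ $n - 1 \le (n/B)\,m\,a$, which holds as soon as $ma \ge B$. The only remaining task, and the main obstacle, is the bookkeeping: one must produce parameters simultaneously satisfying $B \ge 2m+2$, $B \mid n$, $n/B < B$, the $MCB(a)$ criterion of Proposition~\ref{pavmcbcrit}, and $ma \ge B$ --- for instance $m = 2$, $B = 6$, $n = 18$, $a = 3$, which gives $d = 5$ and $n = 18 \le (d+1)a + 1 = 19$ --- rather than any new combinatorial idea, since the hard content (that minimal covers are forced onto hyperplanes) is already supplied by Theorem~\ref{nobdmcb}.
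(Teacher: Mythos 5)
Your proposal is correct and lands on the same family of examples as the paper --- a rank-$(m+1)$ paving matroid whose hyperplanes include a partition of $E$ into large blocks --- but the numerical core of your argument is organized differently and is in one respect stronger. The paper builds a fresh matroid with exactly $a$ blocks of size $n/a$ (plus all cross-block $m$-sets), argues that the only cover of $E$ by at most $a$ flats is the block partition itself (so $\sum_i r_i = am$), and then solves $\tfrac{n-1}{a} < d < (m-1)a$, choosing parameters asymptotically ($m = n/a^{3/2}$ for ``sufficiently large'' $n$). You instead reuse the matroid of Theorem \ref{nobdmcb} and prove an unconditional lower bound $\sum_i \rk F_i \ge (n/B)\,m$ valid for \emph{every} cover by proper flats of arbitrary length $k$, using the fact that in a paving matroid a flat of rank $j \le m-1$ has exactly $j$ elements together with a one-variable optimization; this matches the theorem's ``in other words'' formulation (which does not bound $k$) more faithfully than the paper's restriction to covers by exactly $a$ flats, and your parameters $(m,B,n,a)=(2,6,18,3)$ with $d = 5 $ and $n = 18 \le 19$ are concrete and directly checkable, whereas the paper's are not. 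The one caveat --- which you flag explicitly and which the paper's proof shares silently, since there $d \gg m$ as well --- is that the improper flat $F_1 = E$ covers $E$ with total rank $m+1 \le d$, so the theorem is only true when read as a statement about covers by \emph{proper} flats; under that reading your argument is complete and requires no further idea.
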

	
	Afterwards, we study general properties of matroids satisfying the matroidal Cayley--Bacharach condition in Section \ref{mcbpolyt}. The main tool used here is the matroid polytope determined by the basis elements. This gives a characterization of ``generic'' matroids that have appropriate connectivity properties (Theorem \ref{defmcb}).  \\
	
	\begin{thm} (Theorem \ref{defmcb}) \\
		Suppose that $M = \sum_{I \subset[n]} y_I \Delta_I$ for some $y_I \ge 0$. \\
		
		Then, the existence of a matroid $N$ satsifying the following conditions can be checked using a set-theoretic condition involving  $(n - 1)$-element subsets of $[n]$ or the sets $I$:
		
		\begin{itemize}
			\item $N$ has an underlying set of the same size $n = |E|$ such that the flats inducing facets of $P_N$ satisfy the conditions $MCB(a)$ 
			
			\item $P_N$ and the matroid polytope of $P_M$ are nondegenerate deformations of each other (i.e. those not passing through vertices)
		\end{itemize}
	\end{thm}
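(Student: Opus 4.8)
The plan is to reduce the statement to a purely combinatorial condition on the facet-defining subsets of the generalized permutohedron $\sum_I y_I \Delta_I$, and then to show that these subsets (hence the condition) are read off from the $(n-1)$-element subsets of $[n]$ together with the support $\{I : y_I > 0\}$. First I would record the standard dictionary: a generalized permutohedron can be written as $\{x : \sum_i x_i = z([n]),\ \sum_{i \in S} x_i \le z(S)\ \text{for all } S\}$ for a submodular function $z$, and in the simplex representation its support function is $z(S) = \sum_{I : I \cap S \ne \emptyset} y_I$. The matroid polytope $P_M$ is the instance $z = \rk$, so the flats inducing facets of $P_M$ are exactly the facet-defining subsets $S$, i.e. those $S$ at which the inequality is strict (not forced by submodularity from proper subsets and supersets).

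Next I would make precise the notion of nondegenerate deformation. The deformations of $P_M$ form a cone (the deformation, or type, cone); the polytopes in its relative interior are exactly those sharing the normal fan of $P_M$, while the boundary consists of the degenerate deformations where a face collapses (a facet ``passes through a vertex''). Thus $P_N$ is a nondegenerate deformation of $P_M$ iff it has the same normal fan, and in particular iff it has the same collection of facet-defining subsets $S$. The crucial consequence is that the facet-inducing flats are an invariant of the deformation class: any matroid $N$ with $P_N$ a nondegenerate deformation of $P_M$ has exactly the same facet subsets $\mathcal{S} = \{S_1, \dots, S_t\}$, and these are determined by the combinatorial data of the $y_I$.

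With the facet subsets in hand, I would translate $MCB(a)$ for the facet-inducing flats into a covering condition. By the definition of $MCB(a)$, the facet flats of $N$ satisfy $MCB(a)$ iff there is no point $p \in [n]$ and no sub-collection $S_{i_1}, \dots, S_{i_a} \in \mathcal{S}$ with $\bigcup_j S_{i_j} = [n] \setminus \{p\}$; equivalently, every $a$-fold union of facet subsets omitting at most one element omits none. Since $\mathcal{S}$ is computed from the $(n-1)$-element subsets $[n]\setminus\{j\}$ (giving the facets of the form $x_j \ge z([n]) - z([n]\setminus\{j\})$) together with the ``connected'' unions of the sets $I$ with $y_I > 0$, this is exactly a set-theoretic condition of the advertised form. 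To return from the combinatorial condition to the existence of a genuine matroid $N$, I would observe that $M$ itself lies in its own deformation class: if the condition holds, then $N = M$ (or any matroid in the class) witnesses existence, while if it fails, no $N$ in the class can satisfy $MCB(a)$ on its facet flats. Hence the existence of $N$ is equivalent to the set-theoretic condition.

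The main obstacle I expect is twofold. First, pinning down exactly which subsets $S$ are facet-defining purely from the support $\{I : y_I > 0\}$ takes care: the strictness condition for $z$ must be reorganized into the clean statement that facets are indexed by the surviving $(n-1)$-subsets together with the connected unions of the $I$, and verifying submodular strictness is where the real computation lives. Second, if the representation $\sum_I y_I \Delta_I$ is only a polymatroid polytope rather than a matroid polytope, then the existence of a matroid $N$ in the deformation class becomes a genuine realizability problem, requiring an integral deformation with $0/1$ vertices in the same chamber; this, rather than the set-theoretic translation itself, is the delicate step.
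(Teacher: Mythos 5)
Your proposal follows essentially the same route as the paper: reduce ``nondegenerate deformation'' to equality of normal fans, observe that the facets of $P_N$ are indexed by the $(n-1)$-element subsets of $[n]$ together with the facet-inducing flats (read off from the support of the $y_I$), and note that $MCB(a)$ for the facet flats then becomes a purely set-theoretic covering condition. The only substantive difference is that you explicitly flag the realizability question (whether the deformation class actually contains a matroid polytope, i.e.\ an integral $0/1$-vertex representative) as the delicate step; the paper's proof does not address this point and implicitly takes $M$ itself as the witness, so your caveat is a legitimate refinement rather than a divergence in method.
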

	
	Under additional assumptions on the collection of subsets $I \subset [n]$ such that $y_I > 0$ and connectedness-related properties of $I$, we can show that checking whether the matroid $M$ itself satsifies the matroidal Cayley--Bacharach property is equivalent to checking whether the set-theoretic analogue holds for the subsets $I$ considered (Part 2 of Theorem \ref{connsetmcbequiv}). Note that the terms below are defined in Section \ref{mcbpolyt} (Definition \ref{connstrdef}, Definition \ref{buildingdefs}, Definition \ref{smcb}). \\
	
	\begin{thm} (Theorem \ref{connsetmcbequiv}) \\
		Suppose that $M$ is a connected matroid satisfying the following conditions:
		
		\begin{itemize}
			\item $M[F, G]$ is connected for all flats $F, G$ such that $F \subset G$ or every flat $A$ of $M$ is both connected and coconnected. For example, consider the graphic matroid $M(K_n)$ of spanning trees in the complete graph $K_n$ (Remark 5.4 on p. 459 of \cite{FS}). 
			
			\item  $P_M = \sum_{I \subset [n]} y_I \Delta_I$ for some $y_I \ge 0$ such that $y_{[n]} > 0$.  As mentioned in Observation \ref{genmink}, the condition here is really one on the ranks of the flats since $z_I = \sum_{J \subset I} y_J$ and $y_I = \sum_{J \subset I} (-1)^{|I| - |J|} z_J$, where $z_I = r - \rk(\Span I)$ with $r = \rk M$ and $\Span I$ being the smallest flat containing the elements of $I$ (Proposition 2.2 and Proposition 2.3 on p. 843 of \cite{ABD}). 
		\end{itemize}
		
		Let $B$ be the collection of subsets $I \subset [n]$ such that $y_I > 0$. Then, the following statements hold:
		
		\begin{enumerate}
			\item The matroid $M$ satisfies the matroidal Cayley--Bacharach property $MCB(a)$ if and only if the set-theoretic analogue of $MCB(a)$ is satisfied by the elements of building closure $\widehat{B}$ of $B$. By the ``set-theoretic analogue'', we mean the matroidal Cayley--Bacharach condition holds with the flats replaced by elements of the building closure (Definition \ref{smcb}). 
			
			\item If $B$ is a building set, then the matroid $M$ satisfies the matroidal Cayley--Bacharach property $MCB(a)$ if and only if the set-theoretic analogue of $MCB(a)$ (Definition \ref{smcb}) is satisfied by the subsets $I \subset [n]$ such that $y_I > 0$.  
		\end{enumerate}
		
	\end{thm}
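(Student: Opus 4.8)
The plan is to turn the matroid-theoretic statement $MCB(a)$ into a purely set-theoretic statement about the combinatorial data $B = \{I : y_I > 0\}$, using the Minkowski decomposition $P_M = \sum_I y_I \Delta_I$ as a dictionary. By Observation~\ref{genmink} together with the Möbius inversion formulas $z_I = \sum_{J \subseteq I} y_J$ and $y_I = \sum_{J \subseteq I}(-1)^{|I|-|J|} z_J$, with $z_I = r - \rk(\Span I)$, the collection $B$ determines and is determined by the rank function of $M$, hence by its lattice of flats. The first step is therefore to pin down exactly which subsets satisfy $y_I > 0$: I would show, via the inversion formula and the cited results of \cite{ABD}, that every such $I$ is a connected flat and that, conversely, the rank of an arbitrary flat is recovered from the elements of $B$ lying inside it. The hypothesis $y_{[n]} > 0$ enters here to guarantee that $[n]$ itself belongs to $B$, so that the building closure has a top element and reflects the connectedness of $M$. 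This makes $B$ the collection of indecomposable pieces out of which all flats are assembled.

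The second step is to describe flats in terms of the building closure $\widehat{B}$ of Definition~\ref{buildingdefs}. I would establish that, under either connectivity hypothesis of the theorem --- $M[F,G]$ connected for all nested flats $F \subset G$, or every flat both connected and coconnected (Definition~\ref{connstrdef}) --- the building-set axiom is compatible with the matroid closure operator, in the sense that whenever $I_1, I_2 \in \widehat{B}$ meet, their union $I_1 \cup I_2$ is again closed. Consequently, passing to $\widehat{B}$ never produces a set whose matroid closure is strictly larger, and the elements of $\widehat{B}$ faithfully encode the flats that can arise in a cover. This is the step where the hypotheses are indispensable; the guiding example is the graphic matroid $M(K_n)$ of \cite{FS}, where the connected flats correspond to connected induced subgraphs and the building closure records precisely their overlapping unions.

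With the dictionary in place, I would prove Part~1 by translating covers in both directions. In the easy direction, if a union of $a$ elements of $\widehat{B}$ covers $[n] \setminus p$ while omitting $p$, then taking matroid closures produces $a$ flats; the content of the previous step is exactly that these closures still omit $p$, so $M$ fails $MCB(a)$ and the set-theoretic analogue of Definition~\ref{smcb} is violated. For the converse I would begin with $a$ flats $F_1, \dots, F_a$ covering $E \setminus p$ and omitting $p$, and replace each $F_i$ by the elements of $\widehat{B}$ it contains. The main obstacle is that a single, possibly disconnected, flat can break into several building-closure pieces, which threatens to inflate the number of sets beyond $a$. The crux of the argument is to use the connectivity and coconnectivity hypotheses to show that the flats occurring in an extremal bad cover may be taken connected, hence single elements of $\widehat{B}$, so that the count $a$ is preserved exactly; matching this count between the two formulations is the heart of the proof.

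Finally, Part~2 is the special case in which no closure is needed. If $B$ is already a building set, then $\widehat{B} = B$ by Definition~\ref{buildingdefs}, so the set-theoretic condition imposed on the subsets $I$ with $y_I > 0$ coincides verbatim with the one imposed on $\widehat{B}$, and the statement follows at once from Part~1.
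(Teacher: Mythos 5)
Your overall strategy---reduce Part 2 to Part 1 via $\widehat{B}=B$, and prove Part 1 by identifying the flats of $M$ with the elements of $\widehat{B}$---is the same as the paper's. The difference lies in how that identification is obtained, and this is where your proposal has genuine gaps. The paper gets both inclusions at once from the polytope: under either connectivity hypothesis every flat of $M$ is facet-defining for $P_M$ (Theorem 5.3 of \cite{FS}, resp.\ Proposition 2.4 of \cite{Dl}), and by Proposition \ref{buildfacet} the facets of a nonnegative Minkowski sum $\sum_I y_I\Delta_I$ with $y_{[n]}>0$ are indexed exactly by the building closure $\widehat{B}$; hence the flats coincide with the elements of $\widehat{B}$ as subsets of $[n]$, and $MCB(a)$ and its set-theoretic analogue become literally the same condition on the same family of sets. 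You instead propose to show (i) every $I\in B$ is a connected flat, (ii) the union of two intersecting elements of $\widehat{B}$ is closed, and (iii) every connected flat is a single element of $\widehat{B}$. None of these is actually derived: (i) does not follow from the M\"obius inversion formulas alone (they express $y_I$ as an alternating sum of corank data and say nothing about positivity forcing $I$ to be closed); (ii) is asserted as ``compatibility of the building-set axiom with the closure operator'' without an argument; and (iii), which is exactly what you need to convert a bad cover by $a$ flats into a bad cover by $a$ elements of $\widehat{B}$, is the hard content and is precisely what the facet correspondence supplies. Citing \cite{ABD} for (i) and (iii) is not sufficient, since the relevant statements there concern the coefficients $y_I$, not membership of flats in $\widehat{B}$.

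A secondary point: your ``crux'' about extremal bad covers and disconnected flats breaking into several building-closure pieces is a red herring. Under either hypothesis of the theorem every flat of $M$ is already connected (take $F=\emptyset$ in the $M[F,G]$ condition, or invoke the co/connectedness assumption directly), so there is no count inflation to control---provided you have (iii). I would rewrite the argument to route everything through the facet description of $P_M$, which is what actually closes the loop between the two formulations.
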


	We end with some constructions which use (directed) graphs to (recursively) determine what the sets involved would look like (Proposition \ref{recsmcb}, Proposition \ref{addsmcb}, Proposition \ref{dirgraph}). 
	
	\section*{Acknowledgements}
	
	I am very thankful to my advisor Benson Farb for his guidance and encouragement. Also, I would like to thank Benson Farb and Amie Wilkinson for extensive comments on an earlier draft.
	
	\section{Ranks of flats covering a matroid satisfying $MCB(a)$} \label{mcbans}
	
	Given a fixed positive integer $a$, we show that there are no nontrivial bounds on the dimensions of $a$ proper flats covering a matroid satisfying $MCB(a)$. Let $n = |E|$ and $r = \rk M$. The ``worst'' possible situation is when the collection of flats considered must be hyperplanes, which are the flats of rank $r - 1$. These occur when we consider paving matroids with appropriate initial parameters. Recall that a paving matroid is one where any set of size $\le r - 1$ is both independent and closed. In other words, a dependent set (equivalently a circuit since considering lower bound) must have size $\ge r$. \\

	Since \emph{any} subset of size $\le r - 1$ is also a flat, we want to eliminate these from consideration since having $F_i$ equal to such a flat would automatically imply that $M$ does \emph{not} satisfy $MCB(a)$ since we can use repeated copies of the same flat. Given an upper bound $B$ on the size of the hyperplanes, we can find a condition which implies that flats $F_i$ such that $\bigcup_{i = 1}^a F_i \supset E \setminus p$ for some $p$ must have $|F_i| \ge r$. Note that this is really a condition on size of finite sets and doesn't have anything to do with the matroid structure.
	
	\begin{lem} \label{lowboundrk}
		Let $F_1, \ldots, F_a \subset E$ be a collection of subsets of $E$ with $|F_i| \le B$ for each $1 \le i \le a$. \\
		
		If $n - 1 - B(a - 1) \ge r$, then \[ \left| \bigcup_{i = 1}^a F_i \right| \ge n - 1 \Longrightarrow |F_i| \ge r \text{ for each $1 \le i \le a$.}  \]
		
		If the $F_i$ are proper flats of a paving matroid $M$ with underlying set $E$ and rank $r$, this implies that the $F_i$ considered must be hyperplanes $M$ (i.e. flats of rank $r - 1$).
		
	\end{lem}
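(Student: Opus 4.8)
The plan is to treat the first implication as a pure counting statement, exactly as the lemma's own remark suggests (``this is really a condition on size of finite sets''), and then read off the matroid-theoretic conclusion from the paving hypothesis. Subadditivity of cardinality does all the work in the first part; the paving structure enters only at the very end, to convert a lower bound on $|F_i|$ into a lower bound on its rank.

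First I would fix an arbitrary index $j$ and separate $F_j$ from the remaining sets. Since the other $a - 1$ sets each have size at most $B$, their union has size at most $B(a-1)$, so by subadditivity of cardinality
\[ n - 1 \le \left| \bigcup_{i=1}^a F_i \right| \le |F_j| + \left| \bigcup_{i \neq j} F_i \right| \le |F_j| + B(a-1). \]
Rearranging gives $|F_j| \ge n - 1 - B(a-1)$, and the hypothesis $n - 1 - B(a-1) \ge r$ then yields $|F_j| \ge r$. As $j$ was arbitrary, every $F_i$ has at least $r$ elements. This is the entire content of the first implication, and it uses nothing beyond the containments of sets and the uniform bound $|F_i| \le B$.

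For the second assertion I would invoke the defining property of a paving matroid to control the size of low-rank flats. If $F$ is a flat of rank $k \le r - 2$, then every subset of $F$ has rank at most $k \le r - 2$; in particular $F$ can contain no circuit, since in a paving matroid every circuit has size at least $r$ and hence rank at least $r - 1$. Thus $F$ is independent, forcing $|F| = k \le r - 2 < r$. Contraposing, a proper flat $F_i$ with $|F_i| \ge r$ cannot have rank $\le r - 2$, and since it is proper it must have rank exactly $r - 1$, i.e.\ be a hyperplane. Combined with the first part, every $F_i$ appearing in a cover with $\bigl|\bigcup_i F_i\bigr| \ge n - 1$ is therefore a hyperplane.

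I expect the only genuinely subtle point to be the paving step: one must check that a flat of rank $\le r - 2$ is actually forced to be independent, which is precisely where the circuit-size characterization of paving matroids is used. Everything else is elementary counting, so the argument is short; the main conceptual move is recognizing that the size bound $|F_i| \ge r$ is incompatible with the small size that the paving condition imposes on low-rank flats, leaving hyperplanes as the only option.
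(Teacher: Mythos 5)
Your proof is correct and takes essentially the same route as the paper: the first implication is the identical subadditivity count (the paper runs it as a contradiction from $|F_1| \le r-1$, you run it directly, which is equivalent). You also spell out the paving-matroid step (a flat of rank $\le r-2$ contains no circuit, hence is independent and has size $\le r-2$), which the paper leaves implicit in the surrounding discussion; that step is correct as written.
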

	
	\begin{proof}

		Suppose that $|F_1| \le r - 1$. Then, we have that 
		
		\begin{align*}
			n - 1 &\le \left| \bigcup_{i = 1}^a F_i \right| \\
			&\le |F_1| + \left| \bigcup_{i = 2}^a F_i \right| \\
			&\le r - 1 + (a - 1)B,
		\end{align*}
		
		which contradicts the assumption that $n - 1 - B(a - 1) \ge r$.
		
		\color{black}

	\end{proof}
	
	Before we study the covering question, we give an example of a matroid satisfying the degree $a$ matroidal Cayley--Bacharach condition $MCB(a)$ which doesn't have a nontrivial bound on ranks of flats covering the underlying finite set $E$. The construction we will use involves paving matroids, which are defined below.
	
	\begin{defn} (p. 24 of \cite{Ox}) \\
		A matroid $M$ is \textbf{paving} if it has no circuits of size $\le \rk M$. In particular, flats of rank $\le r - 2$ are always independent sets, where $r = \rk M$.
	\end{defn}
	
	\begin{defn} (p. 71 of \cite{Ox}) \\
		Let $k$ and $m$ be integers with $k > 1$ and $m > 0$. Suppose that $\mathcal{T}$ is a collection $\{ T_1, \ldots, T_k \}$ of subsets of a set $E$ such that each member of $\mathcal{T}$ has $\ge m$ elements, and each $m$-element subset of $E$ is contained in a unique member of $\mathcal{T}$. Such a set is called an \textbf{$m$-partition of $E$.}
	\end{defn}
	
	\begin{prop} \label{pavhyp} (Proposition 2.1.24 on p. 71 of \cite{Ox}) \\
		If $\mathcal{T}$ is an $m$-partition $\{ T_1, \ldots, T_k \}$ of a set $E$, then $\mathcal{T}$ is the set of hyperplanes of a paving matroid of rank $m + 1$ on $E$. Moreover, for $r \ge 2$, the set of hyperplanes of every $\rk r$ paving matroid on $E$ is an $(r - 1)$-partition of $E$. 
	\end{prop}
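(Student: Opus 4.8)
The plan is to handle the two implications separately, and for the forward direction to build the matroid directly from a candidate rank function, so that the paving property and the value of the rank are visible at once rather than only the hyperplane set. The one combinatorial fact underlying everything is that distinct members of an $m$-partition meet in few elements: if $i \ne j$ then $|T_i \cap T_j| \le m-1$. Indeed, if $|T_i \cap T_j| \ge m$, an $m$-element subset of $T_i \cap T_j$ would lie in both $T_i$ and $T_j$, contradicting the uniqueness clause in the definition of an $m$-partition. The same uniqueness argument, applied to an $m$-subset of a single member, shows that no member of $\mathcal{T}$ is contained in another and (using $k \ge 2$) that $E \notin \mathcal{T}$; together with the intersection bound it also gives $|E| \ge |T_1 \cup T_2| \ge m + m - (m-1) = m+1$.

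For the forward direction I would define $\rho \colon 2^E \to \mathbb{Z}_{\geq 0}$ by $\rho(A) = |A|$ when $|A| \le m$, by $\rho(A) = m$ when $|A| \ge m+1$ and $A \subseteq T_i$ for some $i$, and by $\rho(A) = m+1$ otherwise, and then verify that $\rho$ is the rank function of a matroid $M$ on $E$. The bound $\rho(A) \le |A|$ and monotonicity are routine from the case definitions, using that $A \subseteq T_i$ forces every subset of $A$ into $T_i$. The main obstacle is submodularity, $\rho(A \cup B) + \rho(A \cap B) \le \rho(A) + \rho(B)$, which I would check by cases according to which clause computes each term; the only genuinely nontrivial case is $A \subseteq T_i$ and $B \subseteq T_j$ with $i \ne j$ and $A \cup B$ contained in no member, where the inequality becomes exactly $(m+1) + |T_i \cap T_j| \le m + m$, i.e.\ the intersection bound above. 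Granting submodularity, $\rho$ is a rank function, and the remaining conclusions follow by inspection: $\rk M = \rho(E) = m+1$ since $E$ lies in no $T_i$ and $|E| \ge m+1$; every set of size $\le m$ is independent, so $M$ is paving; and one reads off the rank-$m$ flats to see that $\mathcal{H}(M) = \mathcal{T}$. Concretely, each $T_i$ is a flat because adjoining any $x \notin T_i$ produces a set contained in no member and hence of rank $m+1$, while any hyperplane $H$ satisfies $\rho(H) = m$, so either $|H| = m$, forcing $H$ to be the unique member of size $m$ containing it, or $H \subseteq T_i$ with $H = T_i$ by comparing ranks.

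For the converse, let $M$ be a paving matroid of rank $r \ge 2$ and set $m = r-1$; I claim its hyperplanes form an $m$-partition. Each hyperplane has rank $m$ and hence at least $m$ elements. Given any $m$-subset $S$, the paving hypothesis makes $S$ independent, so $\rk S = m$ and $\Span S$ is a flat of rank $m$, that is, a hyperplane; moreover any hyperplane containing $S$ must contain $\Span S$ and, having the same rank, equal it. Thus $S$ lies in exactly one hyperplane, which is the required uniqueness, and $\{\text{hyperplanes of } M\}$ is an $m = (r-1)$-partition. The only subsidiary point is the requirement $k > 1$, i.e.\ that there are at least two hyperplanes; this holds whenever $M$ is nondegenerate, since two $m$-subsets with distinct closures yield two distinct hyperplanes.
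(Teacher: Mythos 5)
Your argument is correct, but note that the paper itself offers no proof of this statement: it is imported verbatim from Oxley (Proposition 2.1.24), so there is no internal proof to compare against. Your route differs from the standard textbook one. Oxley's proof works with the independence/dependence structure directly (the independent sets are the sets of size at most $m$ together with the $(m+1)$-sets not contained in any $T_i$, and one verifies the independence axioms), whereas you construct the candidate rank function $\rho$ outright and verify (R1)--(R3). The payoff of your version is that the rank, the paving property, and the identification of the hyperplanes all fall out of the same three-clause definition of $\rho$; the cost is the submodularity case analysis, where you correctly isolate the one substantive case and reduce it to the bound $|T_i \cap T_j| \le m-1$, which is exactly where the uniqueness clause of the $m$-partition enters. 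Two small points worth tightening if you write this up in full: in the case $|H| = m$ of the hyperplane identification you are tacitly using the \emph{existence} half of the $m$-partition axiom (every $m$-set lies in \emph{some} $T_i$), and without it the construction still yields a matroid but with extra hyperplanes, so that hypothesis is genuinely load-bearing; and the ``nondegenerate'' hedge in your last sentence is unnecessary, since any matroid of rank $r \ge 2$ automatically has two distinct hyperplanes (take the closures of $\{b_1,\dots,b_{r-1}\}$ and $\{b_2,\dots,b_r\}$ for a basis $\{b_1,\dots,b_r\}$), so $k>1$ always holds in the converse direction.
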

	
	We can show that the first statement of Definition \ref{flatcover} does \emph{not} hold (especially if we want a small number of linear subspaces) (Theorem \ref{nobdmcb}). Note that \emph{some} bound on the number of linear subspaces involved is needed since we can always end up with some collection of lines or planes if we use a sufficient number of flats in the cover. The example which we used (paving matroids with appropriate parameters) can be used to show that the first part of the question (i.e. the direct matroid-theoretic translation of Conjecture \ref{lincov} in Part 2 of Question \ref{flatcover}) also does \emph{not} hold. \\
	
	\begin{thm} \label{mcbdimnobd}
		There is a matroid $M$ satisfying $MCB(a)$ with ground set $E$ ($n:= |E|$) such that there is some $d$ such that $n \le (d + 1)a + 1$ but $E$ \emph{cannot} be covered by a union of flats of total rank $d$. In other words, we have that $\bigcup_{i = 1}^k F_i = E \Longrightarrow \sum_{i = 1}^k \rk F_i \ge d + 1$. 
	\end{thm}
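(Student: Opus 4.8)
The plan is to reuse the paving matroid underlying Theorem \ref{nobdmcb} and to promote its conclusion---that any economical cover is forced to use hyperplanes---into a lower bound on total rank. Concretely, fix integers $m \ge 2$, an even $B \ge 2m+2$, and $q \ge 4$ with $q < B$; set $n = qB$ and $r = m+1$, and let $M$ be the rank-$(m+1)$ paving matroid on $E$ whose size-$B$ hyperplanes are $q$ disjoint blocks $T_1,\ldots,T_q$ partitioning $E$, the remaining hyperplanes being the $m$-subsets that meet two or more blocks. Existence follows from Proposition \ref{pavhyp}, and the $T_i$ are exactly the hyperplanes of size exceeding $m$. The matroid constructed in the proof of Theorem \ref{nobdmcb} satisfies $MCB(a)$ for every $a \le q + \frac{B}{m} - 3$: to miss a single $p \in T_i$ one must cover $T_i \setminus p$ without using the hyperplane $T_i$, and every other flat meets $T_i$ in at most $m-1$ points, so $E\setminus p$ admits no cover by so few flats.

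The new ingredient is a uniform lower bound on $\sum_i \rk F_i$ over all covers of $E$ by \emph{proper} flats (the matroidal analogue of the proper positive-dimensional subspaces of Conjecture \ref{lincov}; the improper flat $E$ covers everything at rank $r$ and must be excluded, exactly as $\mathbb{P}^n$ is). In this matroid every proper flat $F$ obeys the efficiency inequality $\rk F \ge \frac{m}{B}|F|$: a hyperplane has rank $m$ and size at most $B$, while a flat of rank $j \le r-2 = m-1$ is independent and so has $|F|=j$, whence $j \ge \frac{m}{B}j$ since $\frac{m}{B}<1$. Summing over a cover $F_1,\ldots,F_k$ of $E$ and using $\sum_i |F_i| \ge |\bigcup_i F_i| = n$ gives
\[
  \sum_{i=1}^{k} \rk F_i \;\ge\; \frac{m}{B}\sum_{i=1}^{k} |F_i| \;\ge\; \frac{m}{B}\,n \;=\; qm .
\]
The $q$ disjoint blocks already cover $E$ with total rank $qm$, so this bound is sharp.

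I would then set $d = qm - 1$. The display gives $\sum_i \rk F_i \ge qm = d+1$ for every proper-flat cover, so $E$ cannot be covered by flats of total rank $d$. The size hypothesis $n \le (d+1)a + 1 = qma + 1$ is equivalent to $a \ge \frac{qB-1}{qm} = \frac{B}{m} - \frac{1}{qm}$, i.e.\ to $a \ge \lceil B/m\rceil$. Since $\lceil B/m\rceil \le q + \lfloor B/m\rfloor - 3$ once $q \ge 4$, the interval $[\,\lceil B/m\rceil,\; q + \lfloor B/m\rfloor - 3\,]$ is nonempty; choosing $a$ there makes $M$ satisfy $MCB(a)$ and satisfy $n \le (d+1)a+1$, while every cover of $E$ is forced to total rank at least $d+1$. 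This is exactly a counterexample to the direct matroidal translation of Conjecture \ref{lincov}.

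The main obstacle is not the rank estimate, which drops straight out of the efficiency inequality, but the simultaneous parameter fit. The blocks furnish a cover whose total rank $qm = nm/B$ is small relative to $n$, yet $MCB(a)$ must still forbid any cheaper mixed cover of $E\setminus p$; this pins $a$ into the window $\frac{B}{m} \lesssim a \le q + \frac{B}{m} - 3$. Its non-emptiness---hence the whole construction---relies on $B \ge 2m+2$ (so that $\frac{m}{B}<1$ and hyperplanes are genuinely cheaper per point than small flats) together with $q \ge 4$ and $q < B$. I expect the only delicate bookkeeping to be tracking the ceilings and floors so that an integer $a$ survives in this window for the chosen $m,B,q$.
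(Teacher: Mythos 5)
Your proof is correct, and although it builds essentially the same paving matroid as the paper (the blocks-plus-$m$-subsets construction of Theorem \ref{nobdmcb}), the decisive step is genuinely different. The paper's own proof restricts attention to covers by at most $a$ flats, forces every flat in such a cover to be a hyperplane via the size count behind Lemma \ref{lowboundrk}, concludes that the total rank of such a cover is $am$, and then arranges the parameters (ultimately taking $m = n/a^{3/2}$ with suitable divisibility and ``sufficiently large'' $n, m, a$) so that an integer $d$ with $\frac{n-1}{a} < d < (m-1)a$ exists. You instead prove a per-flat efficiency inequality $\rk F \ge \frac{m}{B}\lvert F\rvert$, valid for \emph{every} proper flat of the paving matroid (hyperplanes have rank $m$ and size at most $B$; flats of rank at most $m-1$ are independent, hence have size equal to their rank), and sum it over an arbitrary cover to get $\sum_i \rk F_i \ge \frac{m}{B}n = qm$, then set $d = qm - 1$. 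This buys two things: the lower bound applies to covers by \emph{any} number of proper flats, which is what the theorem's unquantified $k$ actually demands (the paper's argument only controls covers by at most $a$ flats), and the parameter bookkeeping becomes an explicit finite check --- $q \ge 4$ guarantees the integer window $\left[\lceil B/m\rceil,\ q + \lfloor B/m\rfloor - 3\right]$ for $a$ is nonempty --- rather than an asymptotic one. Both proofs share the same caveat, which you state explicitly and the paper leaves implicit: the improper flat $E$ (a single flat of rank $m+1 \le d$) must be excluded from the admissible covers, just as $\mathbb{P}^n$ itself is excluded in the geometric statement.
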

	
	\begin{proof}

		We will take $r_i \rk F_i \ge 2$ for each flat $F_i$. In this context, Part 1 of Question \ref{flatcover} can be rephrased as whether we can keep $\sum_{i = 1}^a d_i = \sum_{i = 1}^a r_i - a \le d$. To construct a counterexample, it suffices to produce an $M$ satisfying $MCB(a)$ such that $n \le (d + 1)a + 1$ and \emph{any} cover $\bigcup_{i = 1}^a F_i = E$ has $\sum_{i  =1}^a r_i > d + a$. Note that such an example suffices when we take some of the $F_i$ to have rank $1$ (i.e. that $r_i = 1$ for some $i$) since the required lower bound for $\sum_{i = 1}^a r_i$ only get smaller. \\
		
		We will construct a paving matroid $M$ satisfying $MCB(a)$ with $n \le (d + 1) a + 1$ such that any cover $\bigcup_{i = 1}^a F_i = E$ has $\sum_{i = 1}^a r_i > d + a$. Let $m + 1$ be the rank of the paving matroid. By Proposition 2.1.24 on p. 71 of \cite{Ox}, the hyperplanes are given by elements of $m$-partitions of the ground set $E = [n]$. Since any set of size $\le m - 1$ is closed, any flats $F_i$ involved in the $MCB(a)$ definition must be hyperplanes. In this setting, the condition $\sum_{i = 1}^a r_i > d + a$ can be rewritten as $am > d + a \Longleftrightarrow d < (m - 1)a$. The condition $n \le (d + 1)a + 1$ is equivalent to having $(d + 1)a \ge n - 1 \Longleftrightarrow \frac{n - 1}{a}$. Then, having both $n \le (d + 1)a + 1$ and $\sum_{i = 1}^a r_i > d + a$ simultaneously is equivalent to having $\frac{n - 1}{a} < d < (m - 1)a$. The existence of a $d$ such that this is true is equivalent to having $\frac{n - 1}{a} < (m - 1) a \Longleftrightarrow \frac{n - 1}{a^2} < m - 1$. \\
		
		The arguments above are on the possible initial parameters. We still need to show that there is a paving matroid of rank $m + 1$ with ground set $E = [n]$ satisfying $MCB(a)$ such that $\frac{n - 1}{a^2} < m - 1$. The idea is to consider a paving matroid where there is a collection of ``big'' hyperplanes (evenly) partitioning the ground set $E$ and the remaining elements of the $m$-partition being subsets of size $m$ (i.e. subsets of size $m$ with elements from at least $2$ distinct blocks/big hyperplanes). It suffices to take the big hyperplanes to have size $\frac{n}{a}$ partitioning $E = [n]$ into $a$ parts and $\frac{n}{a} \gg m$ since losing a single block means replacement by $\ge \frac{n}{am}$ small hyperplanes to fill in the resulting gap. This implies that $MCB(a)$ is satisfied since using any smaller number of big hyperplanes of size $\frac{n}{a}$. The condition $\frac{n}{a} \gg m$ is satisfied when $m = \frac{n}{a^{\frac{3}{2}}}$ when $a$ is a cube and $a^{\frac{3}{2}}$ divides $n$. It suffices to consider $n$ such that $n$ is divisible by $a^2$ and $a$ is a square. For sufficiently large $n, m, a$ it is clear that we can both have $\frac{n}{a} \gg m$ and $\frac{n - 1}{a^2} < m - 1$ if $m = \frac{n}{a^{\frac{3}{2}}}$.

	\end{proof}

	For Part 2 of Question \ref{flatcover}, we would like to find a bound on possible hyperplanes involved in the cover defining the matroidal Cayley--Bacharach property $MCB(a)$ of degree $a$. Note that Theorem \ref{mcbdimnobd} implies that there is a negative answer when $N(M) = a$ (the degree used in the matroidal Cayley--Bacharach proeprty). \\ 
	
	\begin{exmp} \label{pavexmp}
		Take an even number $B \ge 4$ with $B | n$ and $\frac{n}{B} < B$. We construct a rank $2$ paving matroid satisfying $MCB(a)$ for $a \le \min (\frac{n}{B} + \frac{B}{2} - 3, \frac{n - 3}{B} + 1)$. By Lemma \ref{lowboundrk}, the second term in the pair on the right hand side reduces the flats under consideration to hyperplanes. Note that paving matroids satisfying $MCB(a)$ must have the flats involved in covers by $\le a$ distinct flats equal to hyperplanes if the flats here are proper. This uses the following characterization of paving matroids by possible subsets of the underlying set giving rise to hyperplanes. \\

		Let $n = |E|$. We can set $m = 1$ above and make $T_1, \ldots, T_a$ equal to a subcollection of distinct subsets of $\{ 1, \ldots, n \}$ from the following families: 
			
		\begin{itemize}
			\item $\frac{n}{B}$ subsets of size $|B|$ partitioning $\{ 1, \ldots, n \}$ into $\frac{n}{B}$ parts
			
			\item $\binom{B}{2} B^2$ subsets of size $2$ consisting of pairs of points from distinct blocks of size $B$
		\end{itemize}
		
		This collection of subsets yields hyperplanes of a paving matroid $M$ satisfying $MCB(a)$. If some subcollection of these subsets is missing an element of $E$, it is missing $\ge 2$ elements. We first find collections of $a$ subsets we can use so that $a \le \frac{n}{B} + \frac{B}{2} - 3$. Since $|B| > 2$, the number of subsets used is minimized when we maximize the number of subsets of size $B$ and minimize the number of size $2$ used. Also, we use at most $\frac{n}{B} - 1$ of the subsets of size $B$ and there are at least $B$ elements of $E$ left to fill using the collection of pairs.  This would mean using $\frac{n}{B} - 1$ subsets of size $B$ and at most $\frac{B}{2} - 2$ pairs. However, this would leave us with at least $4$ missing elements. Using fewer subsets of size $B$ and more pairs would mean that we would use too many (i.e. more than $a$) subsets. Thus, the matroid $M$ is a rank $3$ paving matroid satisfying $MCB(a)$ for $a \le \frac{n}{B} + \frac{B}{2} - 3$. Examples where this procedure goes through is $n = 20$, $B = 5$, and $a = 4, 5$.
		
	\end{exmp}

	A generalization of Example \ref{pavexmp} can be used to give a negative answer to Part 1 of Question \ref{flatcover} for a fixed length $a$. In the context of the comments below Question \ref{flatcover}, the flats $F_i$ have rank $\le r - 1$. \\

	\begin{thm} \label{nobdmcb}
		\vspace{-3mm}
		Take an even number $B \ge 2m + 2$ with $B | n$ and $\frac{n}{B} < B$.  Fix $k, a \le \frac{n}{B} + \frac{B}{m} - 3$. 
		
		\begin{enumerate}
			\item Let $M$ be a matroid of rank $m + 1$ satisfying $MCB(a)$ with underlying set $E$ of size $n$. Suppose that $F_1, \ldots, F_k$ is a $k$-tuple of flats covering $E$ with each proper flat of size at most $B$. There is no nontrivial upper bound on the ranks of proper flats $F_i$ which applies to all such matroids $M$ satisfying $MCB(a)$.  In other words, it is possible for all the flats $F_i$ to be hyperplanes. Question \ref{flatcover} contains an explanation of why this indicates that there is no ``nontrivial'' bound. \\ 
			
			\item In fact, the matroid from the proof of Part 1 gives a negative answer to Question \ref{flatcover} using the case $N \le \frac{n}{B} + \frac{B}{m} - 3$. More specifically, there is no upper bound on the ranks of a collection of $\le a$ flats which cover the underlying set $E$ of a matroid of rank $r$ satisfying $MCB(a)$.  
		\end{enumerate}

	\end{thm}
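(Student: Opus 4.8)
The plan is to lift the rank-$3$ construction of Example \ref{pavexmp} to arbitrary rank $m+1$ and then read off both $MCB(a)$ and an all-hyperplane cover from the explicit hyperplane structure. Concretely, I would define the rank-$(m+1)$ paving matroid $M$ on $E=[n]$ by prescribing, via Proposition \ref{pavhyp}, the $m$-partition whose members are the $\tfrac{n}{B}$ \emph{big blocks} $C_1,\dots,C_{n/B}$ of size $B$ partitioning $E$, together with every $m$-element subset of $E$ meeting at least two blocks (the \emph{small hyperplanes}). First I would check that this really is an $m$-partition: an $m$-subset contained in a single block lies in that block and in no size-$m$ member, whereas an $m$-subset meeting two blocks is its own unique member, so each $m$-subset lies in exactly one member and Proposition \ref{pavhyp} yields a genuine rank-$(m+1)$ paving matroid. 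From the paving structure its proper flats are then exactly the sets of size $\le m-1$, the small hyperplanes of size $m$, and the big blocks of size $B$; in particular every proper flat has size $\le B$, the only proper flats of size $>m$ are the big blocks, and (since $m\ge 2$) the rank-$1$ flats are singletons, as $MCB$ requires.

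To verify $MCB(a)$ I would argue by contradiction. Suppose flats $F_1,\dots,F_a$ cover $E\setminus\{q\}$ yet omit $q$, and let $C$ be the block containing $q$. Then no $F_i$ contains $q$, so none of them is $C$ or $E$; hence at most $\tfrac{n}{B}-1$ of the $F_i$ are big blocks and every remaining $F_i$ has size $\le m$. Writing $g$ for the number of big blocks used, the crude size estimate
\[ \Bigl| \bigcup_{i=1}^{a} F_i \Bigr| \le gB + (a-g)m = (B-m)g + am \le n - 2m \le n-2 \]
then holds, because the right-hand side is increasing in $g$ (as $B-m>0$) and a direct computation shows that at $g=\tfrac{n}{B}-1$ and $a=\tfrac{n}{B}+\tfrac{B}{m}-3$ the $\tfrac{mn}{B}$ terms cancel to give exactly $n-2m$. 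Thus the $F_i$ omit at least $2m\ge 2$ points of $E$, contradicting the assumption that they cover all of $E\setminus\{q\}$. Therefore no $a$ flats can omit exactly one point, which is precisely the statement that $M$ satisfies $MCB(a)$.

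For the covering assertions I would simply exhibit the witness. The big blocks $C_1,\dots,C_{n/B}$ partition $E$, hence cover it, and each is a hyperplane of rank $m=r-1$; when a longer tuple is wanted I would pad with arbitrary distinct small hyperplanes, which are again rank-$(r-1)$ flats and leave the cover intact. This gives, for a matroid $M$ satisfying $MCB(a)$, a cover of $E$ by $k$ flats every one of which is a hyperplane, so the rank bound on the $F_i$ cannot be pushed below the trivial value $r-1$; this is Part 1. For Part 2 the same big-block cover uses $\tfrac{n}{B}\le a$ proper flats, all of rank $r-1$, so $M$ is \emph{not} forced to be covered by $\le a$ flats one of which has rank $\le r-2$, which is the desired negative answer to Question \ref{flatcover}.

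The step I expect to require the most care is the estimate of the second paragraph. I must confirm the classification of the flats of $M$ (so that no unexpected flat of intermediate size exists and ``size $>m$'' genuinely forces a big block), check that the maximization of $(B-m)g+am$ behaves as claimed in the degenerate range $a<\tfrac{n}{B}-1$ (there $g=a$ and the bound reads $aB<n-B\le n-2m$, using $B>2m$), and handle the boundary cases: fewer than two blocks, or $a<\tfrac{n}{B}$, where no size-$\le B$ cover of $E$ exists and the covering statement is vacuous. The inequality $B\ge 2m+2$ is exactly what keeps $\tfrac{n}{B}+\tfrac{B}{m}-3$ safely inside the range where a single-point omission is impossible, so I would keep that hypothesis in view throughout the bookkeeping.
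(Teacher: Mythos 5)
Your proposal is correct and takes essentially the same route as the paper: the identical paving matroid (the $\frac{n}{B}$ blocks of size $B$ partitioning $E$ together with all $m$-subsets meeting at least two blocks, assembled via Proposition \ref{pavhyp}), the same counting argument showing that $a$ flats omitting a point must in fact omit at least $2m \ge 2$ points, and the same all-hyperplane cover by the big blocks. Your write-up is if anything slightly more careful than the paper's, in explicitly checking the $m$-partition axiom and packaging the count as the maximization of $(B-m)g+am$.
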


	\begin{proof}
	 	\begin{enumerate}
	 		\item Let $E = \{ 1, \ldots, n \}$ be the underlying set of the matroid. The statements above follow from adapting the argument used in Example \ref{pavexmp} to subsets of size $m$ and paving matroids of rank $m + 1$ in place of subsets of size $2$ and paving matroids of rank $3$. Fix $B \ge 2m + 2$ with $\frac{n}{B} < B$ If $k, a \le \frac{n - m - 1}{B} + 1$, then Lemma \ref{lowboundrk} shows that the flats under consideration must be hyperplanes and we are done. Suppose that this is \emph{not} the case. Consider the paving matroid of rank $m + 1$ with the following subsets of size $\ge m$ as hyperplanes:
	 		
	 		\begin{itemize}
	 			\item $\frac{n}{B}$ subsets of size $|B|$ partitioning $\{ 1, \ldots, n \}$ into parts
	 			
	 			\item $A_{m, \frac{n}{B}}$ subsets of size $m$, where $A_{m, u}$ denotes the number of ordered partitions of $m$ into $B$ distinct parts with at least 2 nonempty parts. This corresponds to $m$-tuples with points with elements coming from at least $2$ different blocks of size $B$ from the first bullet.
	 		\end{itemize}
	 		
	 		We claim that this collection of subsets yield the hyperplanes of a matroid $M$ satisfying $MCB(a)$. In other words, we would like to show that a subscollection of $a$ subsets \emph{not} covering $E$ is missing $\ge 2$ elements. Note that we assumed that $a \le \frac{n}{B} + \frac{B}{m} - 3$. Since $|B| > m$, the number of subsets is minimized when we maximize the number of subsets of size $B$ and minimize the number of size $m$ subsets. For a non-covering collections of subsets, we use at most $\frac{n}{B} - 1$ of the subsets of size $B$ and there are at least $B$ elements of $E$ left to fill using the $m$-tuples of points. This would mean using $\frac{n}{B} - 1$ elements of size $B$ and at most $\frac{B}{m} - 2$ pairs. However, this would leave us with at least $2m$ missing elements. Using a smaller number of subsets of size $B$ and more $m$-elements would increase the number of subsets used by at least $\frac{B}{m}$. Thus, the matroid $M$ we obtain is a rank $m + 1$ paving matroid satisfying $MCB(a)$ for $a \le \frac{n}{B} + \frac{B}{m} - 3$. Note that the same arguments that we have just used imply that at least $\frac{n}{B} - 1$ of the blocks of size $B$ must be used.
	 		
	 		\item The same reasoning as Part 1 applies since having $\le a$ covering $E$ would require all of them to be hyperplanes. This is because any flat of rank $\le m - 2$ would have size $\le m - 2$. The argument in Part 1 implies that we need at least $\frac{n}{B} - 1$ hyperplanes of size $B$ in order to cover $E$ with $\le a$ flats. Since there are only $\le \frac{B}{m} - 2$ available flats to use for the cover, any remaining flats (even when we use hyperplanes) do not have enough elements of $E$ to cover the remaining elements of $E$ not covered by the earlier $\frac{n}{B}$ hyperplanes of size $B$.
	 	\end{enumerate}
		
	\end{proof}

	We can make some statements on which paving matroids yield hyperplanes compatible with the matroidal Cayley--Bacharach condition $MCB(a)$. They show that the restriction of the matroidal Cayley--Bacharach property to paving matroids has a recursive property and that some upper bound on $a$ is necessary in order for $MCB(a)$ to hold for a paving matroid of a given rank.

	\begin{prop} \label{pavmcbcrit} ~\\
		
		\vspace{-5mm}
		
		\begin{enumerate}

			\item  Let $M$ be a paving matroid of rank $m + 1$ with underlying set $E = \{ 1, \ldots, n \}$. Suppose that $|F| \le B$ for all flats $F$ of $M$. Fix $a \ge 3$. Suppose that $M$ satisfies $MCB(a)$. Fix a hyperplane $A$ of $M$. Let $R$ be the paving matroid on $E \setminus A$ of rank $m + 1$ with hyperplanes given by $H \cap (E \setminus A)$ for hyperplanes $H$ of $M$ containing $\ge m$ elements of $E \setminus A$. Then, $R$ satisfies $MCB(a - 1)$.

			\item Fix an integer $m \ge 3$. If $a$ is sufficiently large, there is a paving matroid $M$ of rank $m + 1$ with underlying set $E = \{ 1, \ldots, n \}$ that does \emph{not} satisfy $MCB(a)$.

		\end{enumerate}
	\end{prop}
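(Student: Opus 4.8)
The plan is to treat the two parts separately: Part~1 by a lifting argument that trades the deleted hyperplane $A$ for one extra flat and then invokes $MCB(a)$ for $M$, and Part~2 by exhibiting an explicit rank-$(m+1)$ paving matroid in which $a$ hyperplanes cover exactly the complement of a single point. For Part~1, I would start from proper flats $G_1, \ldots, G_{a-1}$ of $R$ with $\bigcup_{i=1}^{a-1} G_i \supseteq (E\setminus A)\setminus\{q\}$ for some $q \in E\setminus A$ and aim to show $q \in \bigcup_i G_i$ (if some $G_i$ is the improper flat $E\setminus A$ the conclusion is immediate, so I may assume each $G_i$ is proper). The key is to lift each $G_i$ to a flat $G_i'$ of $M$ with $G_i'\cap(E\setminus A)=G_i$. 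Because $R$ is paving of rank $m+1$, each proper flat $G_i$ either is a hyperplane, in which case $G_i = H_i\cap(E\setminus A)$ for a hyperplane $H_i$ of $M$ and I set $G_i' = H_i$, or has rank $\le m-1$ and is thus an independent closed set of size $\le m-1$, which is automatically a flat of the paving matroid $M$ and I set $G_i' = G_i$. In either case $G_i'\cap(E\setminus A)=G_i$ and $G_i'\supseteq G_i$. The $a$ flats $A, G_1', \ldots, G_{a-1}'$ of $M$ then satisfy $A \cup \bigcup_i G_i' \supseteq A\cup((E\setminus A)\setminus\{q\}) = E\setminus\{q\}$, so $MCB(a)$ for $M$ forces $q \in A\cup\bigcup_i G_i'$; since $q\notin A$ we get $q\in G_j'$ for some $j$, hence $q\in G_j'\cap(E\setminus A)=G_j$. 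Thus $R$ satisfies $MCB(a-1)$.

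For Part~2, I would build a counterexample directly, in the spirit of Example~\ref{pavexmp} and Theorem~\ref{nobdmcb}. Fix $m \ge 3$, take $a$ large, set $n = am+1$, single out a point $p \in E = \{1,\ldots,n\}$, and partition $E\setminus\{p\}$ into $a$ blocks $T_1,\ldots,T_a$ each of size $\ge m$. Declaring the $T_i$ together with every $m$-subset of $E$ not contained in a single $T_i$ to be the members of an $m$-partition of $E$ yields, by Proposition~\ref{pavhyp}, a rank-$(m+1)$ paving matroid $M$ whose hyperplanes are precisely these sets (the simplest instance being the uniform matroid $U_{m+1,n}$, in which every $m$-subset is a hyperplane). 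The hyperplanes $T_1,\ldots,T_a$ then satisfy $\bigcup_{i=1}^a T_i = E\setminus\{p\}$ with $p\notin\bigcup_i T_i$, so $M$ fails $MCB(a)$. Choosing $a$ sufficiently large simply guarantees that $n$ can be taken large enough for such a block decomposition of a genuine rank-$(m+1)$ paving matroid to exist; one may alternatively read this recursively as the contrapositive of Part~1, producing failures of $MCB(a)$ from failures of $MCB(a-1)$ by adjoining a new hyperplane $A$.

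The routine verifications aside, the main obstacle will be the bookkeeping of the flat correspondence in Part~1: I must make sure that every proper flat of $R$ genuinely lifts to a flat of $M$ and that intersecting the lift back with $E\setminus A$ recovers the original flat, so that membership of $q$ transfers in both directions. This is exactly where the paving hypothesis is used, since it pins down the flats of $R$ as either hyperplanes or low-rank independent sets and makes both the lift and its inverse unambiguous; without the size control on flats and the paving structure the two flat lattices need not match up this cleanly. For Part~2 the only genuine check is the $m$-partition axiom (each $m$-subset lies in a unique member) for the constructed family, which is routine but must be confirmed for the chosen parameters so that $M$ is indeed a valid rank-$(m+1)$ paving matroid.
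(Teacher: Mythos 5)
Your Part 1 is essentially the paper's own argument: the paper proves the contrapositive, lifting a cover of $(E\setminus A)\setminus\{p\}$ by $a-1$ flats of $R$ to a cover of $E\setminus\{p\}$ by $a$ flats of $M$ after adjoining the hyperplane $A$, which is exactly the lifting you carry out in the direct direction. You are in fact a little more careful than the paper, which tacitly treats every flat in the violating cover of $R$ as a hyperplane; your explicit handling of the rank $\le m-1$ flats (which, being sets of size $\le m-1$, are automatically flats of the paving matroid $M$) closes that small gap, and the improper-flat case is harmless as you note. Part 2 is where you genuinely diverge. The paper fixes a hyperplane $A$, sorts the remaining hyperplanes into three types according to how they meet $E\setminus A$, and engineers a paving matroid satisfying two conditions so that $A$ together with a collection $\mathcal{C}$ of the third type of hyperplanes covers exactly $|E|-1$ points, yielding failure of $MCB(a)$ for $a\ge|\mathcal{C}|+1$; this construction is built to mesh with the deletion operation of Part 1. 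Your construction --- take $n=am+1$ and (in effect) the uniform matroid $U_{m+1,n}$, whose hyperplanes are all $m$-subsets, so that $a$ disjoint $m$-blocks cover exactly $E\setminus\{p\}$ --- is simpler, its $m$-partition axiom is immediate, and it proves the stronger statement that such a matroid exists for \emph{every} $a\ge 1$ rather than only for $a$ sufficiently large. What the paper's heavier construction buys is a family of examples exhibiting the recursive structure emphasized elsewhere in the paper; what yours buys is brevity and an easily checkable witness.
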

	
	\begin{proof}
		\begin{enumerate}
			\item Suppose that $R$ does \emph{not} satisfy $MCB(a - 1)$. Then, there are flats $F_i$ of $R$ such that $\bigcup_{i = 1}^{a - 1} F_i = (E \setminus A) \setminus p$ for some $p \in E \setminus A$. By definition, there are hyperplanes $H_i$ of $M$ such that $|H_i \cap (E \setminus A)| \ge m$ and $F_i = H_i \cap (E \setminus A)$. Consider the union of flats of $M$ given by $A \cup \left( \bigcup_{i = 1}^{a - 1} H_i \right)$. Since $F_i = H_i \cap (E \setminus A)$, the only ``new'' elements added to $A$ come from those of $F_i$. This means that $A \cup \left( \bigcup_{i = 1}^{a - 1} H_i \right) = E \setminus p$ and $M$ does \emph{not} satisfy $MCB(a)$.
			
			\item Let $A$ be a subset of $E$ of size $\ge m$. The hyperplanes of a paving matroid $M$ of rank $m + 1$ with ground set $E$ with $A$ as a hyperplane split into the following categories (first, second, third cateogries):
			
				\begin{itemize}
					\item \textbf{Type 1}: The hyperplane $A$ itself
					
					\item \textbf{Type 2}: Hyperplanes containing $\ge m$ elements of $E \setminus A$
					
					\item \textbf{Type 3}: Hyperplanes containing $v$ elements of $A$ and $w$ elements of $E \setminus A$, where $1 \le v, w \le m - 1$
				\end{itemize}
			
			The last category gives the rest of the hyperplanes since each $m$-tuple of points of $E$ is contained in a \emph{unique} hyperplane. This means that we avoid repeating $m$-tuples coming from the first and second category. The conditions listed in the last category are given by this reasoning. \\
			
			In the last category, hyperplanes with $m - 1$ elements of $E \setminus A$ give rise to a partition of the subset of $A$ not used by hyperplanes in the second category. This is because $m$-tuples cannot be repeated among different hyperplanes of $M$. If a hyperplane contains $m - 2$ elements of $E \setminus A$, it contains $\ge 2$ elements of $A$ which do \emph{not} appear among the elements of the partition given by the hyperplanes of $M$ with $m - 1$ elements of $E \setminus A$. In general, hyperplanes of $M$ with $m - P$ elements of $E \setminus A$ have $\ge P$ elements of $A$ which are have not appeared in hyperplanes using more elements of $E \setminus A$. \\

			Consider a paving matroid $M$ of rank $m + 1$ with ground set $E$ containing $A$ as a hyperplane satisfying the following conditions:
			
				\begin{itemize}
					\item \textbf{Condition 1}: The Type 2 hyperplanes do \emph{not} contain any elements of $A$. In other words, suppose that hyperplanes of the second type form an $m$-partition of $E \setminus A$. 
					
					\item \textbf{Condition 2}: There is a collection of Type 3 hyperplanes (e.g. those with $m - 1$ elements of $E \setminus A$) such that the union of the elements of elements of $E \setminus A$ from each hyperplane $A$ has size $|E \setminus A| - 1$. 
				\end{itemize}
			
			Given a paving matroid $M$ satisfying both Condition 1 and Condition 2, let $\mathcal{C}$ be a collection of hyperplanes satisfying the properties listed in Condition 2. Taking the union of the hyperplanes in $\mathcal{C}$ with $A$, we obtain a union of hyperplanes of size $|E| - 1$. This implies that $M$ does \emph{not} satisfy $MCB(a)$ for $a \ge |\mathcal{C}| + 1$ since adding more hyperplanes either keeps the size of the union equal to $|E| - 1$ or makes it equal to $E$. The size is equal to $|E| - 1$ if we either keep repeating hyperplanes which were already used or only add new hyperplanes which do \emph{not} contain the element left out by the union of the hyperplanes in $\mathcal{C}$ and $A$. Thus, it suffices to show that there is a paving matroid satisfying both Condition 1 and Condition 2. \\ 
			
			As stated in the definition of Condition 1, we start by forming an $m$-partition of $E \setminus A$. Focusing on Type 3 hyperplanes with $m - 1$ elements of $E \setminus A$, we find that we need to use \emph{all} $(m - 1)$-element subsets of $E \setminus A$ in order to account for $m$-tuples in $E$ with $m - 1$ elements of $E \setminus A$ since the hyperplanes in the second category (i.e. those with $\ge m$ elements of $E \setminus A$) do not contribute any $m$-tuples containing elements of $A$. For the Type 3 hyperplanes, we take the hyperplanes to be the $m$-tuples which are \emph{not} covered by the $m$-tuples contained in a hyperplane of Type 1 (i.e. the hyperplane $A$) or one of Type 2. The resulting paving matroid satisfies Condition 2 since we can choose the collection in the definition of Condition 2 to be the $m$-tuples contained in a fixed $(|E \setminus A| - 1)$-element subset of $E \setminus A$.  
			
		\end{enumerate}
	\end{proof}

	\section{Matroids satisfying $MCB(a)$}

	This section studies families of matroids satisfying $MCB(a)$ including ``generic'' cases and those arising from graphs.
	
	\subsection{Matroid polytopes and $MCB(a)$} \label{mcbpolyt}
	
	In this section, we outline results on ``generic'' (connected) matroids satisfying the matroidal Cayley--Bacharach property $MCB(r)$ (of degree $r$) which can be determined set-theoretically (Theorem \ref{connsetmcbequiv} and Theorem \ref{genmink}). We can translate this into properties of ranks of flats that cover the underlying set of such matroids (Corollary \ref{setmcbflatcov}). Finally, we give some more concrete information on the structure of the sets involved (Proposition \ref{recsmcb} and Proposition \ref{addsmcb}). \\

	We will study matroids satisfying these properties via polytopes built out of them which are uniquely defined by the starting matroids.

	\begin{prop} (Feichtner--Sturmfels, Proposition 2.3 on p. 441 of \cite{FS}) \\
		The matroid polytope $P_M$ associated to a matroid $M$ with an underlying set $E$ of size $n$ (written as $[n] := \{ 1, \ldots, n \}$) is \[ P_M = \left\{ x \in \Delta : \sum_{i \in F} x_i \le \rk F \text{ for all flats } F \subset [n] \right\}, \] where $\Delta = n\Delta_E$. 
	\end{prop}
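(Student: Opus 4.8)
The plan is to identify the right-hand side with Edmonds' half-space description of the matroid (basis) polytope and then reduce the defining inequalities from arbitrary subsets to flats. Write $P_M = \Conv\{ e_B : B \text{ a basis of } M \}$, where $e_B = \sum_{i \in B} e_i \in \{0,1\}^n$, and read membership in the ambient simplex $\Delta$ as imposing $x_i \ge 0$ for all $i$ together with $\sum_{i=1}^n x_i = \rk M$ (the coordinate sum at which every $e_B$ lives). With this reading the claim is an equality of two polytopes sitting in the same hyperplane, so I would prove it by establishing the two inclusions.

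For the inclusion $\subseteq$, since $P_M$ is the convex hull of the $e_B$ and the right-hand side is cut out by linear inequalities, it is enough to check each vertex $e_B$. Nonnegativity and $\sum_i (e_B)_i = |B| = \rk M$ are immediate. For a flat $F$ we have $\sum_{i \in F} (e_B)_i = |B \cap F|$, and $B \cap F$ is an independent subset of $F$, whence $|B \cap F| \le \rk F$. Thus every $e_B$, and hence all of $P_M$, satisfies the flat inequalities.

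The reverse inclusion is the substantive point, and here I would invoke Edmonds' theorem. Let $Q = \{ x : x_i \ge 0,\ \sum_{i \in A} x_i \le \rk A \text{ for all } A \subseteq [n],\ \sum_i x_i = \rk M \}$. Edmonds' matroid polytope theorem asserts that the vertices of $Q$ are exactly the $e_B$, so $Q = P_M$. The one nonformal ingredient --- and the step I expect to be the main obstacle --- is this integrality statement; its engine is the submodularity of the rank function, $\rk(A) + \rk(C) \ge \rk(A \cup C) + \rk(A \cap C)$. Concretely, one runs the greedy algorithm against a linear objective to produce an optimal vertex that is the indicator of a basis, and conversely checks that submodularity forces every vertex of $Q$ to be a $0/1$ vector, necessarily $e_B$ for a basis $B$. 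I would either cite this as the classical theorem or insert the short greedy-exchange argument.

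It remains to pass from all subsets to flats, which is exactly what separates the stated form from $Q$. For an arbitrary $A \subseteq [n]$ let $\Span A$ be its closure, the smallest flat containing $A$; then $A \subseteq \Span A$ and $\rk(\Span A) = \rk A$. If $x \ge 0$ satisfies every flat inequality, then $\sum_{i \in A} x_i \le \sum_{i \in \Span A} x_i \le \rk(\Span A) = \rk A$, so the flat inequalities already imply all subset inequalities; the converse holds trivially because flats are themselves subsets. Hence the flat-indexed polytope equals $Q$, and combining this with the two inclusions gives $P_M = \{ x \in \Delta : \sum_{i \in F} x_i \le \rk F \text{ for all flats } F \subset [n] \}$, as claimed.
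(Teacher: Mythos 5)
Your proof is correct, and since the paper does not prove this proposition at all (it is quoted verbatim from Feichtner--Sturmfels and used as a black box), there is no internal argument to compare against; your route --- vertex check for one inclusion, Edmonds' integrality theorem for the subset-indexed polytope, and the reduction from arbitrary subsets $A$ to their closures via $\rk(\Span A) = \rk A$ together with $x \ge 0$ --- is exactly the standard derivation underlying the cited result. The only substantive ingredient you do not supply is Edmonds' theorem itself, which you flag honestly and which is legitimate to cite. One small point worth recording: the statement's ambient simplex is written as $\Delta = n\Delta_E$, but for the equality to hold the coordinate sum must be $\rk M$ rather than $n$ (otherwise no $e_B$ lies in $\Delta$ unless $M$ has rank $n$); your reading of $\Delta$ as $\bigl\{ x \ge 0 : \sum_i x_i = \rk M \bigr\}$ is the correct one and silently repairs what appears to be a typo in the transcription.
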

	
	Alternatively, this is the convex hull of vectors $e_B := \sum_{i \in B} e_i$ for bases $B$ of the matroid $M$. Note that $P_M$ is uniquely determined by $M$ (Theorem 4.1 on p. 311 of \cite{GGMS}) and that this property has even been used to define a matroid in Definition 2.1 on p. 440 of \cite{FS}. Each of these can be taken to be a \emph{signed} Minkowski sum of simplices.
	
	\begin{prop} (Ardila--Benedetti--Doker and Postnikov, Proposition 2.3 on p. 843 of \cite{ABD} and Proposition 6.3 and Remark 6.4 on p. 17 -- 18 of \cite{Pos})
		Any generalized permutohedron (e.g. matroid polytopes) has a decomposition as \textbf{signed Minkowski sums of simplices} with 
		
		\[ P_n( \{ z_I \} ) = \sum_{I \subset [n]} y_I \Delta_I, \] 
		
		where $y_I = \sum_{J \subset I} (-1)^{|I| - |J|} z_J$ for each $I \subset [n]$ and $z_I = \sum_{I \subset J} y_J$. \\ 
		
		In addition, any such Minkowski sum gives a generalized permutohedron (Proposition 2.2.3 on p. 14 of \cite{Dok}).  The latter condition is equivalent to having the $z_I$ satisfy submodular inequalities equivalent to the definition of some rank function on a matroid (Theorem 2.21 on p. 13 of \cite{Dok}). 
	\end{prop}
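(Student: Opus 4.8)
The plan is to work entirely with support functions, since the Minkowski sum of polytopes corresponds to pointwise addition of support functions, and a generalized permutohedron is precisely a polytope whose support function $h_P(u) = \max_{x \in P}\langle u, x\rangle$ is piecewise linear with respect to the braid fan. First I would record the two facts that reduce the whole statement to a linear-algebra computation: (i) the support function of the standard simplex is $h_{\Delta_J}(u) = \max_{i \in J} u_i$, so a formal combination $\sum_J y_J \Delta_J$ has support function $\sum_J y_J \max_{i \in J} u_i$; and (ii) a function that is piecewise linear on the braid fan is determined by its values on the rays of that fan, which are exactly the indicator vectors $e_I = \sum_{i \in I} e_i$ for $\emptyset \neq I \subsetneq [n]$. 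Setting $z_I := h_P(e_I)$ thus gives coordinates determining $P$ up to translation.

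Next I would carry out the change of coordinates between $\{y_I\}$ and $\{z_I\}$. Evaluating the building blocks on a ray gives $h_{\Delta_J}(e_I) = [\,I \cap J \neq \emptyset\,]$, so $z_I = \sum_{J} y_J\,[\,I\cap J\neq\emptyset\,]$. After the standard complementation substitution (replacing $I$ by its complement and absorbing the constant $z_{[n]}$), this incidence relation becomes the containment relation $z_I = \sum_{J \subset I} y_J$ on the Boolean lattice, whose inverse is Möbius inversion: $y_I = \sum_{J \subset I} (-1)^{|I| - |J|} z_J$. The structural point I would stress is that the containment matrix is unitriangular with respect to inclusion, hence invertible; this simultaneously shows that the assignment $P \mapsto \{y_I\}$ is well defined and that the $\Delta_I$ (together with the points $\Delta_{\{i\}}$ accounting for translations) span the full space of braid-fan-piecewise-linear functions. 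That spanning statement is exactly the assertion that every generalized permutohedron decomposes as a signed Minkowski sum with the stated formulas.

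For the converse and the submodularity criterion, I would translate the statement that $\sum_J y_J \Delta_J$ is an honest polytope into the statement that $h := \sum_J y_J \max_{i\in J} u_i$ is convex, which is the only genuine constraint once the $y_J$ are allowed to be negative. Convexity of a function that is linear on each maximal braid cone is equivalent to the wall inequalities across adjacent cones, and a direct computation shows these are precisely the submodular inequalities $z_I + z_{I'} \ge z_{I \cup I'} + z_{I \cap I'}$ for the coordinates $z_I = h(e_I)$. By the characterization of submodular/rank functions (Theorem 2.21 on p. 13 of \cite{Dok}), this is the same as saying the $\{z_I\}$ arise from the rank function of a (poly)matroid, which gives the final clause. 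I expect the main obstacle to be handling the signs: unlike an ordinary Minkowski sum, the signed sum need not be a polytope, so the argument must keep the bookkeeping between ``formal combination,'' ``support function,'' and ``convexity'' scrupulously separate, and the surjectivity of the $\Delta_I$-expansion in the second step is where that bookkeeping is most delicate.
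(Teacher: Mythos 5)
The paper does not prove this proposition at all---it is recalled verbatim from Ardila--Benedetti--Doker and Postnikov with citations---so there is no in-paper argument to compare against; your support-function/M\"obius-inversion argument is correct and is essentially the standard proof given in those cited sources (evaluate support functions on the rays $e_I$ of the braid fan, observe the zeta-matrix of the Boolean lattice is unitriangular, invert, and identify convexity across walls with submodularity). One small point worth flagging: the relation as displayed in the statement, $z_I = \sum_{I \subset J} y_J$, is a typo for $z_I = \sum_{J \subset I} y_J$ (the containment form your complementation step correctly arrives at, and the one the paper itself uses later in Theorem \ref{connsetmcbequiv}), and whether one obtains sub- or supermodular inequalities depends on whether $z_I$ is the maximum or the minimum of $\sum_{i \in I} x_i$ over $P$---your bookkeeping with $z_I = h_P(e_I)$ is internally consistent on this point.
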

	
	\begin{obs} \label{genmink}
		For an open/generic/top-dimensional subset of the deformation cone parametrizing generalized permutohedra (i.e. deformations of the usual permutohedron), one we can take $y_I \ge 0$ for each $I$ (Remark 6.4 on p. 1043 of \cite{Pos}). The fact that $y_I = \sum_{J \subset I} (-1)^{|I| - |J|} z_J$ for each $I \subset [n]$ implies that the condition $y_I \ge 0$ for each $I \subset [n]$ is really an inequality on the ranks $r - z_I$ of the flats.
	\end{obs}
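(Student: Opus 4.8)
The first assertion of the observation — that on an open, top-dimensional subset of the deformation cone one may take $y_I \ge 0$ for all $I$ — is exactly Remark 6.4 of \cite{Pos}, so for this half the plan is to invoke that citation directly rather than to reprove it. The substantive content is the second assertion: that, under the identification $z_I = r - \rk(\Span I)$ recorded in the statement of Theorem \ref{connsetmcbequiv} (and justified there by Proposition 2.2 and Proposition 2.3 on p. 843 of \cite{ABD}), the positivity condition $y_I \ge 0$ is in fact a linear inequality purely in the flat-ranks $r - z_I = \rk(\Span I)$. The plan is to substitute this identification into the Möbius-inversion formula $y_I = \sum_{J \subset I}(-1)^{|I| - |J|} z_J$ and to track which contributions survive.

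Carrying this out, I would write $z_J = r - \rk(\Span J)$ and separate the sum into its constant part and its rank part:
\[
y_I = \sum_{J \subset I}(-1)^{|I| - |J|}\bigl(r - \rk(\Span J)\bigr) = r\sum_{J \subset I}(-1)^{|I| - |J|} - \sum_{J \subset I}(-1)^{|I| - |J|}\rk(\Span J).
\]
The \emph{key step} is the observation that the first sum vanishes for every nonempty $I$: grouping the subsets $J \subset I$ by cardinality gives $\sum_{J \subset I}(-1)^{|I| - |J|} = \sum_{j = 0}^{|I|}\binom{|I|}{j}(-1)^{|I| - j} = (1 - 1)^{|I|} = 0$. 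Hence for all $I \ne \emptyset$ we obtain $y_I = -\sum_{J \subset I}(-1)^{|I| - |J|}\rk(\Span J)$; the free parameter $r$ has cancelled, so the condition $y_I \ge 0$ becomes a signed inequality among the ranks $\rk(\Span J) = r - z_J$ over $J \subset I$, which is precisely what the observation asserts.

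I do not anticipate a genuine obstacle here: the only point requiring care is the cancellation of the constant term, and it is exactly this cancellation that makes the condition depend on the $z_J$ (equivalently on the ranks) rather than on $r$ separately. The one bookkeeping caveat worth recording is the exceptional case $I = \emptyset$, where the alternating sum equals $1$ instead of $0$; since the flats appearing in the matroidal Cayley--Bacharach condition and in the sums of Theorem \ref{connsetmcbequiv} arise from nonempty $I$, this does not affect the intended reinterpretation of $y_I \ge 0$ as a constraint on flat-ranks.
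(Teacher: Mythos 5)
Your proposal is correct and follows the same route the paper takes: the first claim is delegated to Remark 6.4 of \cite{Pos}, and the second follows by reading the M\"obius-inversion formula for $y_I$ through the identification $z_J = r - \rk(\Span J)$ (the paper states this observation without further proof, so your explicit computation showing the constant term $r\sum_{J \subset I}(-1)^{|I|-|J|}$ vanishes is a welcome elaboration rather than a departure). The only bookkeeping point beyond the $I = \emptyset$ caveat you already record is the convention for the $J = \emptyset$ term in the sum; but since $r = \rk M$ is itself the rank of the improper flat, the conclusion that $y_I \ge 0$ is an inequality on flat-ranks is unaffected either way.
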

	
	In the setting of Observation \ref{genmink}, we can make some characterizations of matroids satisfying the matroidal Cayley-Bacharach property. When the $y_I \ge 0$ for all $I \subset [n]$. the facets have natural connections with nested sets and buildings.
	
	\begin{defn} \label{buildingdefs} (Building set and closure, Lemma 3.9 and Lemma 3.10 on p. 450 of \cite{FS}, Definition 7.1 on p. 1044 of \cite{Pos}) 
		\begin{enumerate}
			\item A collection $B$ of nonempty subsets of $[n] = \{ 1, \ldots, n \}$ is a \textbf{building set} on $[n]$ if it satisfies the following conditions:
				
				\begin{itemize}
					\item If $I, J \in B$ and $I \cap J \ne \emptyset$, then $I \cup J \in B$.
					
					\item $B$ contains all singletons $\{ i \}$ for $i \in [n]$.
				\end{itemize}
			
			\item Given a collection $\mathcal{F}$ of subsets of $[n]$, let $\widehat{\mathcal{F}}$ be the unique minimal collection containing $\mathcal{F}$ of subsets such that $\widehat{\mathcal{F}}$ is a building set on $[n]$. The collection $\widehat{\mathcal{F}}$ is called the \textbf{building closure}. Note that this exists for \emph{any} family of subsets $\mathcal{F}$ of $[n]$. 
		\end{enumerate}
	\end{defn}
	
	\vspace{2mm}
	
	These properties are connected to an alternate description of the facets of the matroid polytope $P_M$ when the generic property described in Observation \ref{genmink} holds (i.e. when $y_I \ge 0$ for all $i \subset [n]$).

	\begin{prop} (Proposition 3.12 and Corollary 3.13 on p. 151 of \cite{FS}) \label{buildfacet} \\
		Given a Minkowski sum of (scaled) simplices $\sum_{I \subset [n]} y_I \Delta_I$ for some $y_I \ge 0$, let $B$ be the collection of subsets $I \subset [n]$ such that $y_I > 0$. The polytope $\sum_{I \subset [n]} y_I \Delta_I$ consists of vectors $(x_1, \ldots, x_n) \in \mathbb{R}_{\ge 0}^n$ such that $x_1 + \ldots + x_n = |B|$ and \[ \sum_{i \in G} x_i \ge | \{ I \in B : I \subset G \}|  \] for all subsets $G \subset [n]$. \\
		
		It suffices to take subsets $G$ in the building closure $\widehat{B}$ of $B$. If $[n] \in B$, the condition that the linear form $\sum_{i \in G} x_i$ is minimized on a facet is \emph{equivalent} to $G$ being in the building closure $\widehat{B}$. 
	\end{prop}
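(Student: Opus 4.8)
The plan is to read the facet inequalities off from how linear functionals behave under Minkowski sums, and then to isolate which functionals actually cut out facets using the building set combinatorics. Write $\ell_G(x) = \sum_{i \in G} x_i$ for $G \subset [n]$ and set $P = \sum_{I \subset [n]} y_I \Delta_I$. Since the support function of a Minkowski sum is the sum of the support functions, and since on the simplex $\Delta_I = \Conv\{e_i : i \in I\}$ the functional $\ell_G$ attains minimum value $1$ when $I \subset G$ and $0$ otherwise (a linear functional is minimized at a vertex), the first step is the computation
\[ \min_{x \in P} \ell_G(x) = \sum_{I \subset [n]} y_I \min_{x \in \Delta_I} \ell_G(x) = \sum_{I \subset G} y_I = |\{ I \in B : I \subset G\}|, \]
where the last equality uses $y_I \in \{0,1\}$ (in general the right-hand side is $\sum_{I \subset G} y_I$). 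Taking $G = [n]$ yields the equation $x_1 + \cdots + x_n = |B|$, every $G$ yields a valid inequality $\ell_G(x) \ge |\{I \in B : I \subset G\}|$, and the nonnegativity constraints $x_i \ge 0$ come from the singletons $G = \{i\}$. This gives the inclusion of $P$ into the displayed solution set.

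For the reverse inclusion I would invoke the fact, available from the cited results of Ardila--Benedetti--Doker and Postnikov, that $P$ is a generalized permutohedron: its normal fan coarsens the braid fan, so every edge is parallel to some $e_i - e_j$ and, consequently, every facet of $P$ is the set of minimizers over $P$ of some functional $\ell_G$ with $G \subset [n]$. By the first step the corresponding supporting inequality is then one of the listed ones, so the system above defines $P$ exactly.

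It remains to decide which $G$ define facets, which is the combinatorial heart of the statement. Fix $G$ and let $C_1, \ldots, C_k$ be the maximal elements of the building closure $\widehat{B}$ contained in $G$. Because a building set is closed under unions of overlapping members and contains every singleton, the $C_j$ are pairwise disjoint and cover $G$, so $G = C_1 \sqcup \cdots \sqcup C_k$ and each $I \in B$ with $I \subset G$ sits inside a unique $C_j$. Hence $\ell_G = \sum_j \ell_{C_j}$ and, by the first step, $\min_P \ell_G = \sum_j \min_P \ell_{C_j}$; since each summand satisfies its own lower bound on $P$, the minimizing face $F_G$ equals the intersection $F_{C_1} \cap \cdots \cap F_{C_k}$. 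If $G \notin \widehat{B}$ then $k \ge 2$, so $F_G$ is an intersection of at least two distinct proper faces and has codimension at least two, and therefore $\ell_G$ is not minimized on a facet. This proves it suffices to impose the inequalities for $G \in \widehat{B}$, and gives one direction of the equivalence.

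The hard part will be the converse when $[n] \in B$: showing that each $G \in \widehat{B}$ with $G \ne [n]$ does define a genuine facet, i.e. that $F_G$ has codimension exactly one (and, en route, that the $F_{C_j}$ above are indeed distinct facets, so the codimension count in the previous paragraph is sharp). For this I would argue that $P$ is the nestohedron of the building set $\widehat{B}$, use that nestohedra are simple polytopes whose facets are in bijection with the non-maximal elements of the building set, and exhibit, for each such $G$, a point of $P$ realizing $\ell_G$ with equality while keeping the remaining listed inequalities strict; an induction on $|\widehat{B}|$ that removes a maximal proper element of the building set and tracks how the simplex summands restrict is the natural vehicle. Verifying that these intersections drop the dimension by exactly one rather than collapsing further is the delicate point, and it is precisely where the hypothesis $[n] \in B$ (connectedness of $\widehat{B}$, forcing $P$ to be full-dimensional inside the hyperplane $\{\ell_{[n]} = |B|\}$) is used.
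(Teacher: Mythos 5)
First, a remark on the comparison you asked for: the paper gives no proof of this proposition at all --- it is quoted (with attribution) from Feichtner--Sturmfels, Proposition 3.12 and Corollary 3.13 --- so there is no internal argument to measure you against; what follows measures your proposal against the standard proof in the cited source, which your first three paragraphs essentially reproduce. Those paragraphs are correct: the computation $\min_{\Delta_I}\sum_{i\in G}x_i = 1$ if $I\subset G$ and $0$ otherwise, the additivity of minima over Minkowski summands, your (accurate) observation that the right-hand sides $|B|$ and $|\{I\in B: I\subset G\}|$ implicitly assume $y_I=1$ on $B$ and should in general read $\sum_I y_I$ and $\sum_{I\subset G}y_I$, the appeal to the generalized-permutohedron property to get the reverse inclusion, and the decomposition of an arbitrary $G$ into the maximal elements $C_1,\ldots,C_k$ of $\widehat{B}$ contained in it, which exhibits the inequality for $G$ as the sum of those for the $C_j$ and hence as redundant when $k\ge 2$. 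One small repair there: to conclude codimension at least two you need the faces $F_{C_1},\ldots,F_{C_k}$ to be distinct, which can fail in degenerate cases (e.g.\ when some $\sum_{i\in C_j}x_i$ is constant on the polytope); it is safer to phrase the conclusion as ``the inequality for $G$ is implied by those for the $C_j$ and so never cuts a new facet,'' which is all the first assertion of the proposition needs.

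The genuine gap is the converse half of the final equivalence: that when $[n]\in B$, every proper $G\in\widehat{B}$ really is facet-defining. You defer this, and the route you sketch is close to circular --- ``$P$ is the nestohedron of $\widehat{B}$'' is false as stated (the two polytopes merely share a normal fan, and that coincidence is itself part of what Feichtner--Sturmfels prove), while ``facets of a nestohedron biject with non-maximal building-set elements'' is essentially the claim to be established. The standard non-circular finish is a dimension count on the minimizing face: the face of $P=\sum_I y_I\Delta_I$ on which $\sum_{i\in G}x_i$ is minimized is $\sum_{I\subset G}y_I\Delta_I+\sum_{I\not\subset G}y_I\Delta_{I\setminus G}$, and the dimension of a Minkowski sum of simplices $\sum_J\Delta_{S_J}$ is $n$ minus the number of connected components of the hypergraph with edges $S_J$ (plus isolated vertices). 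When $[n]\in B$ and $G\in\widehat{B}$, the sets indexing this face split into exactly two connected blocks, one covering $G$ (connectedness of $\{I\in B: I\subset G\}$ over $G$ is precisely what membership in the building closure provides) and one covering $[n]\setminus G$ (via $[n]\setminus G$ itself), so the face has dimension $n-2$ and is a facet; when $G\notin\widehat{B}$ the first block breaks into the $k\ge2$ pieces $C_1,\ldots,C_k$ and the dimension drops further. Without some argument of this kind the ``if'' direction of the equivalence is asserted rather than proved.
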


	Under an additional connectivity assumption, this is entirely determined by set-theoretic considerations corresponding to the ranks of the flats of the given matroid.

	\begin{defn} \label{connstrdef} (p. 457 of \cite{FS}, p. 183 of \cite{Dl}) \\
		Given flats $F, G$ of $M$ with $F \subset G$, the subsets \[ M[F, G] := \{ b \cap (G \setminus F) : b \in M, |b \cap F| = \rk F, |b \cap G| = \rk G \} \] of the underlying set define a matroid with ground set $G \setminus F$. 
	\end{defn}

	We now state a structural result connecting the matroid polytope with the matroidal Cayley--Bacharach condition $MCB(a)$.

	\begin{thm} \label{connsetmcbequiv}

		Suppose that $M$ is a connected matroid satisfying the following conditions:
			
			\begin{itemize}
				\item $M[F, G]$ is connected for all flats $F, G$ such that $F \subset G$ or every flat $A$ of $M$ is both connected and coconnected. For example, consider the graphic matroid $M(K_n)$ of spanning trees in the complete graph $K_n$ (Remark 5.4 on p. 459 of \cite{FS}). 
				
				\item  $P_M = \sum_{I \subset [n]} y_I \Delta_I$ for some $y_I \ge 0$ such that $y_{[n]} > 0$.  As mentioned in Observation \ref{genmink}, the condition here is really one on the ranks of the flats since $z_I = \sum_{J \subset I} y_J$ and $y_I = \sum_{J \subset I} (-1)^{|I| - |J|} z_J$, where $z_I = r - \rk(\Span I)$ with $r = \rk M$ and $\Span I$ being the smallest flat containing the elements of $I$ (Proposition 2.2 and Proposition 2.3 on p. 843 of \cite{ABD}). 
			\end{itemize}
		
		Let $B$ be the collection of subsets $I \subset [n]$ such that $y_I > 0$. Then, the following statements hold:

		\begin{enumerate}
			\item The matroid $M$ satisfies the matroidal Cayley--Bacharach property $MCB(a)$ if and only if the set-theoretic analogue of $MCB(a)$ is satisfied by the elements of building closure $\widehat{B}$ of $B$. By the ``set-theoretic analogue'', we mean the matroidal Cayley--Bacharach condition holds with the flats replaced by elements of the building closure (Definition \ref{smcb}). 
			
			\item If $B$ is a building set, then the matroid $M$ satisfies the matroidal Cayley--Bacharach property $MCB(a)$ if and only if the set-theoretic analogue of $MCB(a)$ (Definition \ref{smcb}) is satisfied by the subsets $I \subset [n]$ such that $y_I > 0$. 
		\end{enumerate}

	\end{thm}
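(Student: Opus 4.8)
The plan is to pass entirely to the polytope $P_M$ and exploit two complementary descriptions of its facets in order to build a dictionary between the flats of $M$ and the elements of $\widehat{B}$. Since $y_{[n]} > 0$ we have $[n] \in B$, so the final sentence of Proposition \ref{buildfacet} applies: a subset $G \subseteq [n]$ lies in $\widehat{B}$ exactly when the linear functional $x \mapsto \sum_{i \in G} x_i$ attains its minimum over $P_M$ along a facet. On the other hand, in the rank description the facet-defining inequalities among $\sum_{i \in F} x_i \le \rk F$ are precisely those for which $F$ is a flat with both $M|_F$ and $M/F$ connected. Matching the two descriptions through the identity $\min_{x \in P_M} \sum_{i \in G} x_i = \rk M - \rk([n] \setminus G)$ produces a correspondence between the elements of $\widehat{B}$ and the facet-defining flats of $M$; I would first make this correspondence precise and verify that, under it, each element of $\widehat{B}$ is realized as the ground set of a genuine flat of $M$ (and not merely a complementary set).

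Next I would use the two connectivity hypotheses to upgrade this correspondence into a clean identification of the flats relevant to $MCB(a)$. If every flat $A$ of $M$ is both connected and coconnected, then every proper flat is facet-defining, so by the previous step $\widehat{B}$ recovers the ground sets of \emph{all} proper flats of $M$. The alternative hypothesis that $M[F,G]$ is connected for all flats $F \subset G$ is the Feichtner--Sturmfels genericity condition (Definition \ref{connstrdef}) guaranteeing that the nested-set structure of $P_M$ is compatible with the lattice of flats, which yields the same conclusion. In either case the set-theoretic data recorded by $\widehat{B}$ is exactly the flat data of $M$ that can enter a cover of the ground set.

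With the dictionary in hand, the equivalence reduces to a covering reduction for the $MCB$ condition itself. Recall that $M$ fails $MCB(a)$ exactly when some $E \setminus p$ is covered by $a$ flats none of which contains $p$. I would argue that each such covering flat may be enlarged to a \emph{maximal} proper flat still avoiding $p$ without covering $p$, and that such maximal flats are facet-defining, hence have ground sets lying in $\widehat{B}$ by the dictionary. Consequently $M$ fails $MCB(a)$ if and only if some $E \setminus p$ is covered by $a$ elements of $\widehat{B}$ all avoiding $p$, which is precisely the failure of the set-theoretic $MCB(a)$ for $\widehat{B}$ (Definition \ref{smcb}); this is Part 1. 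For Part 2, if $B$ is already a building set then $\widehat{B} = B$, and the identical argument gives the equivalence with $B$ itself in place of $\widehat{B}$.

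I expect the main obstacle to be this covering reduction together with the reverse inclusion in the dictionary. One must verify both that every covering flat can be enlarged to a facet-defining flat missing the same point $p$ (so that no covering power is lost while $p$ stays omitted) and, conversely, that every facet-defining flat's ground set genuinely appears in $\widehat{B}$. Both points are exactly where the connectivity hypotheses are essential: they prevent the building-closure operation $I, J \mapsto I \cup J$ from producing subsets that fail to be ground sets of flats, and they force the facet--flat correspondence extracted from Proposition \ref{buildfacet} to be a bijection onto the proper flats. Confirming that these hypotheses suffice — and reconciling the normalization between the rank description of $P_M$ and the Minkowski-sum description in Proposition \ref{buildfacet} — is the technical heart of the argument.
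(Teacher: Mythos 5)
Your proposal takes essentially the same route as the paper: both use the connectivity hypotheses to make every flat facet-defining, invoke Proposition \ref{buildfacet} (with $y_{[n]}>0$) to identify the facets of $P_M$ with the elements of $\widehat{B}$, and then transfer the covering condition, with Part 2 reduced to Part 1 via $\widehat{B}=B$. The technical points you flag --- in particular that the Minkowski-sum facets are cut out by lower bounds $\sum_{i\in G}x_i\ge \cdot$ while flats give upper bounds $\sum_{i\in F}x_i\le\rk F$, so the dictionary a priori pairs $G\in\widehat{B}$ with the \emph{complement} of a flat and this normalization must be reconciled, and that one must check every element of $\widehat{B}$ actually arises from a flat --- are genuine, and the paper's own proof passes over them with ``tracing through the correspondences,'' so your plan is, if anything, more explicit than the paper about where the real work lies.
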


	\begin{rem}
		If we remove the initial (co)connectivity assumption, counterparts of Part 1 and Part 2 hold with the matroidal Cayley--Bacharach property replaced by its restriction to flats which define facets of the matroid polytope $P_M$ (``flacets''  in Proposition 2.6 on p. 443 of \cite{FS}).
	\end{rem}
	
	\begin{proof}
		Since $\widehat{B} = B$ if $B$ is a building set, Part 2 follows from Part 1. Thus, it suffices to prove Part 1. If $M[F, G]$ is connected for all flats $F \subset G$, the hyperplanes giving the boundary of the half-spaces $\sum_{i \in F} x_i \le \rk F$ for each flat $F$ each yield facets of the matroid polytope $P_M$. This is because there is an an equivalence between complexes whose vertices correspond to \emph{all} connected flats and those that yield facets of the matroid polytope respectively (Theorem 5.3 on p. 459 of \cite{FS})). Alternatively, one can assume that each flat is both connected and co-connected (Proposition 2.4 on p. 184 of \cite{Dl}). \\
		
		The observations made above imply that each of the flats of $M$ define a facet of its matroid polytope $P_M$. By Proposition \ref{buildfacet}, the decomposition into a Minkowski sum $\sum_{I \subset [n]} y_I \Delta_I$ with $y_I \ge 0$ and $y_{[n]} > 0$ implies that elements of the building closure $\widehat{B}$ correspond to facets of $P_M$. Tracing through the correspondences, we find that that the flats of $M$ are given by subsets of $[n]$ in the building closure $\widehat{B}$. Comparing the corresponding normal vectors implies that the matroidal Cayley--Bacharach condition $MCB(a)$ is equivalent to its set-theoretic counterpart applied to the elements of the building closure (Definition \ref{smcb}). 
		
	\end{proof}
	
	Even without the connectedness assumption of Theorem \ref{connsetmcbequiv}, we can still characterize ``generic'' polytopes coming from those satisfying $MCB(r)$ up to a deformations of the the matroid polytopes involved. By ``deformation'', we mean parallel translations of facets passing through the vertices (p. 1041 of \cite{Pos}). An example is shown in Figure 2 on p. 1979 of \cite{CL}. \\

	\begin{thm} \label{defmcb} 
		Suppose that $M = \sum_{I \subset[n]} y_I \Delta_I$ for some $y_I \ge 0$. \\
		
		Then, the existence of a matroid $N$ satsifying the following conditions can be checked using a set-theoretic condition involving  $(n - 1)$-element subsets of $[n]$ or the sets $I$:
		
			\begin{itemize}
				\item $N$ has an underlying set of the same size $n = |E|$ such that the flats inducing facets of $P_N$ satisfy the conditions $MCB(a)$ 
				
				\item $P_N$ and the matroid polytope of $P_M$ are nondegenerate deformations of each other (i.e. those not passing through vertices)
			\end{itemize}
	
	\end{thm}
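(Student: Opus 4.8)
The plan is to fix the normal fan once and for all and then separate the $MCB(a)$ requirement from the problem of realizing that fan by an actual matroid polytope. First I would use that two polytopes are nondegenerate deformations of one another precisely when they share a normal fan; in particular $P_N$ and $P_M$ then have the same collection of facet-inducing subsets of $[n]$. Writing $B = \{I : y_I > 0\}$ and applying Proposition \ref{buildfacet} to the decomposition $M = \sum_{I \subset [n]} y_I \Delta_I$, these facet directions lie in the building closure $\widehat{B}$, and coincide with $\widehat{B}$ once $[n] \in B$. Hence every admissible $N$ has its flacets (the flats inducing facets of $P_N$) given exactly by the subsets $G \in \widehat{B}$; only the facet positions, i.e. the ranks $\rk G$, may vary within the deformation.

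Second, I would note that whether the flacets of $N$ satisfy $MCB(a)$ depends only on which points of $[n]$ each flacet contains, that is, only on the subsets $G \in \widehat{B}$ and not on any rank data. So this condition is verbatim the set-theoretic analogue of $MCB(a)$ (Definition \ref{smcb}) for the collection $\widehat{B}$: for each $p$, no union of $a$ members of $\widehat{B}$ can equal $[n] \setminus \{p\}$. This is exactly a test run over the $(n-1)$-element subsets $[n]\setminus\{p\}$ against the sets $I \in \widehat{B}$, which is the set-theoretic condition named in the statement.

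Third, for existence I would parametrize the open deformation cone of $P_M$ by admissible facet heights, equivalently by the values $z_G = \sum_{G \subset J} y_J$ for $G \in \widehat{B}$, and ask when a lattice point of this cone is a genuine matroid polytope. Through the correspondences $z_I = r - \rk(\Span I)$ and $y_I = \sum_{J \subset I}(-1)^{|I|-|J|} z_J$ of Observation \ref{genmink}, such a point is a matroid polytope exactly when the values $z_G$ arise as $r - \rk_N(\Span G)$ for a matroid $N$ whose flacets are the members of $\widehat{B}$; this is a finite, purely set-theoretic feasibility condition on the intersection pattern of the $I \in \widehat{B}$ and on the forced values at the $(n-1)$-element subsets. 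When $P_M$ is itself a matroid polytope this is automatic (take $N = M$), so the content is the general case. Assembling the three steps, the existence of $N$ is equivalent to the conjunction of this realizability condition with set-theoretic $MCB(a)$ for $\widehat{B}$, and both are set-theoretic.

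The step I expect to be the main obstacle is the reverse direction of existence: given that $\widehat{B}$ passes the set-theoretic test, one must exhibit integer ranks $\rk G$ for $G \in \widehat{B}$ that are simultaneously submodular, so that they define an honest matroid $N$, and interior to the deformation cone, so that no vertex of $P_M$ is crossed and all of $\widehat{B}$ survives as facet directions. The forward implication is the clean ``same fan, hence same flacets'' reduction; the reverse requires showing that such a submodular, cone-interior choice always exists when the combinatorial condition holds, and that its feasibility can be read off from the intersection pattern of the sets $I$ together with the $(n-1)$-element subsets. Controlling submodularity and fan-preservation at the same time is the technical heart of the argument.
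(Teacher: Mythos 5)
Your proposal follows essentially the same route as the paper's proof: reduce ``nondegenerate deformation'' to equality of normal fans, read the facet directions off the Minkowski sum decomposition (the flats/building closure together with the $(n-1)$-element subsets, which do not affect $MCB(a)$), and observe that the $MCB(a)$ condition on the facet-inducing flats is purely set-theoretic. Your third step --- asking whether a submodular, cone-interior rank assignment realizing those facet directions actually exists --- is a genuine existence issue that the paper's proof passes over entirely, so on that half of the statement your version is the more careful one, though you correctly flag rather than resolve it.
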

	
	\begin{proof}
		By Proposition 2.6 on p. 1980 of \cite{CL}, it suffices to check when the matroid polytopes have the same normal fan.  In the comparisons of normal cones, note that two collections of vectors define the same cone if and only if they can be transformed to each other using weighted permutation matrices. Since $M$ is a Minkowski sum of the simplices $\Delta_I$, its normal fan is the common refinement of those of the simplices $\Delta_I$. Since the normal fans consist of cones generated by the (outer) normal vectors of the facets, it suffices to find the facets of a matroid polytope $P_N$. On the other hand, the facets of $P_N$ consist of those coming from flats of $[n]$ and those from $(n - 1)$-element subsets of the ground set $N$ (Proposition 2.3 on p. 441 of \cite{FS} and p. 930 of \cite{Ki}). The $(n - 1)$-element subsets don't affect the $MCB(a)$ condition and the only ones inducing a nontrivial condition involving the flats is the restriction of the $MCB(a)$ condition to the flats which induce facets of the matroid polytope.
		
	\end{proof}
	
	Finally, we discuss the implications of Theorem \ref{connsetmcbequiv} and Theorem \ref{defmcb} for a question of Levinson--Ullery (Question 7.6 on p. 14 of \cite{LU}) for possible ranks of flats satisfying the $MCB(r)$ property. \\

	\begin{cor} \label{setmcbflatcov}
		Under the conditions of Part 2 of Theorem \ref{connsetmcbequiv}, the possible sizes of $|I|$ from subsets $I \subset [n]$ with $y_I > 0$ in Theorem \ref{connsetmcbequiv} and Theorem \ref{defmcb} determine the possible ranks of flats covering the underlying set of a matroid $M$ in Theorem \ref{connsetmcbequiv} and the matroid $N$ we ``deform'' into in Theorem \ref{defmcb}. This essentially addresses Question 7.6 on p. 14 of \cite{LU} for the generic (connected) matroids discussed in these results. 
	\end{cor}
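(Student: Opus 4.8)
The plan is to read off the ranks of covering flats directly from the Minkowski data $\{ y_I \}$ via the dictionaries supplied by the two preceding theorems. Under the hypotheses of Part 2 of Theorem \ref{connsetmcbequiv} the collection $B = \{ I \subset [n] : y_I > 0 \}$ is a building set, so $\widehat{B} = B$, and tracing the correspondence in the proof of that theorem identifies the flats of $M$ with the subsets $I \subset [n]$ having $y_I > 0$ (the improper flat $[n]$ being included since $y_{[n]} > 0$). Because such a flat $F$ is literally a subset $I_F \subset [n]$ of the ground set $E = [n]$, a cover $\bigcup_i F_i = E$ of the underlying set is the same datum as a cover $\bigcup_i I_{F_i} = [n]$ by subsets drawn from $B$.

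First I would translate rank into the set-theoretic $z$-data recorded in Theorem \ref{connsetmcbequiv}. For each flat $F = I_F$ we have $z_{I_F} = r - \rk(\Span I_F)$ with $r = \rk M$, and since $I_F$ is itself a flat we have $\Span I_F = I_F$, so $\rk F = r - z_{I_F}$. Combining this with $z_I = \sum_{J \subset I} y_J$ gives the closed formula $\rk F = r - \sum_{J \subset I_F} y_J$. Hence the rank of each covering flat, and therefore the total rank $\sum_i \rk F_i$ of any cover, is determined by the generators $J$ with $y_J > 0$ sitting inside each $I_{F_i}$: the sizes $|I|$ occurring among the members of $B$, together with their inclusion relations, fix the set of achievable ranks of flats in a cover. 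For the matroid $N$ of Theorem \ref{defmcb} the same conclusion holds because a nondegenerate deformation preserves the normal fan, so $P_N$ and $P_M$ have identical facet directions and hence the same indexing of facet-inducing flats by subsets $I$; the rank of each such flat of $N$ is recovered from the rank function of $N$ through the same formula, so once more the combinatorial type of the generators controls the possible ranks.

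The main obstacle is the precise interpretation of ``determine''. Strictly, $\rk F = r - \sum_{J \subset I_F} y_J$ depends on \emph{all} generators contained in $I_F$, not on $|I_F|$ alone, so one must argue that for covering purposes only the inclusion-maximal generators genuinely contribute and that their sizes pin down the ranks up to the bounded contributions of the smaller $y_J$. Making this bookkeeping rigorous requires checking simultaneously that the identification of facet-defining flats with elements of $\widehat{B}$ respects the covering relation (a union of subsets equals $[n]$ exactly when the corresponding union of flats equals $E$) and the rank assignment; both are exactly what the proofs of Theorem \ref{connsetmcbequiv} and Theorem \ref{defmcb} were arranged to yield, so the corollary follows by assembling those correspondences.
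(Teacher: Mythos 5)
The paper offers no proof of this corollary at all---it is stated as an immediate byproduct of Theorems \ref{connsetmcbequiv} and \ref{defmcb}---so your reconstruction is, in effect, the argument the author intends: identify the facet-inducing flats of $M$ with the members of $B = \widehat{B}$, observe that a cover $\bigcup_i F_i = E$ is then literally a cover of $[n]$ by subsets in $B$, recover ranks from $\rk F = r - z_{I_F} = r - \sum_{J \subset I_F} y_J$, and transport everything to $N$ via the shared normal fan. That matches the dictionaries set up in the two theorems, and your use of $\Span I_F = I_F$ for a flat is the right way to invert the $z$--$y$ relation.

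The one substantive issue is the obstacle you yourself flag and then do not close. The formula $\rk F = r - \sum_{J \subset I_F} y_J$ shows that the rank of a covering flat depends on the entire collection of generators $J \in B$ contained in $I_F$ \emph{together with the values} $y_J$, not on the size $|I_F|$ alone, and not on the multiset of sizes occurring in $B$. Two building sets with the same list of sizes $|I|$ can assign different ranks to the same covering flat, so the corollary as literally worded (``the possible sizes of $|I|$ \ldots determine the possible ranks'') is stronger than what the correspondence yields. What the theorems actually give is that the full data $\{(I, y_I) : y_I > 0\}$, with its inclusion structure, determines the ranks of all flats and hence of all covers. Your closing sentence defers this to ``what the proofs were arranged to yield,'' but neither proof supplies the missing reduction from the full generator data to the sizes alone; you should either prove that reduction (it is false in general) or restate the conclusion in terms of the collection $B$ rather than the sizes $|I|$. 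Since the paper's statement is itself informal on exactly this point, the honest fix is the weaker restatement.
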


	We end with some comments on the sets involved.
	
	\begin{defn} \label{smcb}
		Let $E$ be a finite set of size $n$. A collection of subsets of $E$ satisfies the \textbf{set-theoretic matroidal Cayley--Bacharach property $sMCB(r)$} if $\bigcup_{i = 1}^r F_i \supset E \setminus p \Longrightarrow \bigcup_{i = 1}^r F_i = E$ for the given collection of proper subsets $F_1, \ldots, F_r$ of $E$ and $p \in E$.
	\end{defn}
	
	Note that the condition does \emph{not} impose a restriction on $r$-tuples of subsets $F_1, \ldots, F_r$ such that $|\bigcup_{i = 1}^r F_i| \le n - 2$ since it is not possible for these to contain $E \setminus p$ for any $p \in E$. We can understand possible underlying sets of subsets of $E$ satsifying $sMCB(r)$ recursively where the condition is nontrivial. This depends on a counting argument.
	
	\begin{prop} \label{recsmcb}
		The subsets of $E$ satsifying $sMCB(r)$ can be determined recursively using minimal covers of subsets of $E$.
	\end{prop}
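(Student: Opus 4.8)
The plan is to strip the definition of $sMCB(r)$ down to the single genuinely restrictive case, recast that case as a set-covering (counting) problem, and then observe that covering numbers are computed by an entirely standard recursion on nested subsets of $E$.

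First I would isolate which $r$-tuples actually impose a constraint. As remarked just before the statement, the hypothesis $\bigcup_{i=1}^r F_i \supset E \setminus p$ can hold only when the union has size $n-1$ or $n$, and when it already equals $E$ the conclusion is automatic. Thus, writing $\mathcal{C}$ for the ambient family of proper subsets of $E$, the property $sMCB(r)$ fails exactly when there is a point $p \in E$ together with members $F_1, \dots, F_r \in \mathcal{C}$ whose union is precisely $E \setminus p$. Such a union omits $p$, so necessarily $p \notin F_i$ for every $i$; and since Definition \ref{smcb} allows the $F_i$ to repeat, demanding exactly $r$ sets is the same as demanding at most $r$ distinct ones (pad a shorter cover with repetitions). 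This is the counting reduction: ``an $r$-tuple with union $E \setminus p$'' becomes ``a cover of $E \setminus p$ by at most $r$ members of $\mathcal{C}$, each avoiding $p$.''

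Next I would encode the surviving condition through covering numbers. For $p \in E$ put $\mathcal{C}_p = \{ F \in \mathcal{C} : p \notin F \}$ and let $\mu_p$ be the least number of sets of $\mathcal{C}_p$ whose union is $E \setminus p$, with $\mu_p = \infty$ when no such cover exists. The reduction then reads: $\mathcal{C}$ satisfies $sMCB(r)$ if and only if $\mu_p > r$ for every $p \in E$. Hence deciding $sMCB(r)$, and therefore describing exactly which families satisfy it, reduces to computing the minimal cover of each set $E \setminus p$.

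Finally I would record the recursion itself. For an arbitrary target $S \subseteq E$, the minimal number $\mu(S)$ of members of $\mathcal{C}_p$ covering $S$ satisfies $\mu(\emptyset) = 0$ and, for nonempty $S$, $\mu(S) = 1 + \min\{ \mu(S \setminus F) : F \in \mathcal{C}_p,\ F \subseteq S \}$, so that $\mu_p = \mu(E \setminus p)$ is assembled from the minimal covers of the strictly smaller subsets $S \setminus F$. Running this over all choices of deleted point $p$ produces the minimal covers of every $E \setminus p$ and so determines whether $sMCB(r)$ holds, which is the content of the proposition. I expect no serious obstacle beyond bookkeeping: the two points needing care are keeping the ``avoid $p$'' restriction so that the union stays inside $E \setminus p$ rather than spilling over to all of $E$, and justifying the collapse from ``exactly $r$ subsets'' to ``at most $r$ subsets'' via the repetition clause of Definition \ref{smcb}. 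Once these are in place, the recursive computation of minimal covers is exactly the promised recursive determination.
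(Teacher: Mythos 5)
Your argument is sound in substance and reaches the stated conclusion, but it takes a genuinely different route from the paper. You turn $sMCB(r)$ into a \emph{decision procedure}: after observing that the only restrictive case is a union equal to $E \setminus p$ (so every $F_i$ avoids $p$, and ``exactly $r$'' collapses to ``at most $r$'' by padding with repeats), you reduce the property to the single inequality $\mu_p > r$ for each $p$, where $\mu_p$ is the minimum number of $p$-avoiding members needed to cover $E \setminus p$, and you compute $\mu_p$ by a standard dynamic-programming recursion on shrinking targets $S$. The paper instead organizes the argument as an \emph{enumeration}: it groups the admissible $r$-tuples by their union $A$ (which must have size $\le n-2$ or equal $n$), reduces to minimal covers of $A$, and sets up a recursion $T_{a,b}$ counting the \emph{number} of minimal covers of an $a$-set by $b$ subsets (with a further remark relating disjoint covers to faces of the permutohedron). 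So your recursion computes a covering \emph{number} while the paper's counts covering \emph{configurations}; yours is leaner and more directly answers ``does this family satisfy $sMCB(r)$?'', while the paper's yields a census of all tuples compatible with the property. Both legitimately instantiate the (deliberately informal) claim that $sMCB(r)$ is determined recursively via minimal covers.

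One small repair: in your recursion $\mu(S) = 1 + \min\{\mu(S \setminus F) : F \in \mathcal{C}_p,\ F \subseteq S\}$, the side condition $F \subseteq S$ is harmless at the top level (every member of $\mathcal{C}_p$ lies in $E \setminus p$) but is wrong at deeper levels: a set $F$ not contained in the residual target $S$ can still be used in an optimal cover, contributing $F \cap S$. Dropping the condition $F \subseteq S$ (or replacing $S \setminus F$ by $S \setminus (F \cap S)$, which is the same set) fixes this without affecting anything else.
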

	
	\begin{proof}
		The subsets $F_1, \ldots, F_r$ satisfying the $sMCB(r)$ property depends on the following parts:
		
		\begin{itemize}
			\item A collection of ``ambient sets'' $A \subset E$ of size $\le n - 2$ or $n$ (which will eventually be taken to be the union of $F_1, \ldots, F_r$)

			\item Subsets $F_1, \ldots, F_r \subset A$ such that $\bigcup_{i = 1}^r F_i = A$. This really depends on the number of subsets $F_i$ used in a \emph{minimal} cover of $A$ (say $m \le r$) since the remaining $r - m$ subsets can be any subsets of $A$ and still give a cover of $A$. By ``minimal'', we mean that removing any of the $F_i$ will give a collection of subsets of $A$ whose union is no longer equal to $A$. Thus, it suffices to consider the \emph{minimal} covers of $A$ by $\le r$ subsets. \\
			
			This can be constructed recursively. Let $T_{a, b}$ be the number of minimal covers of a set of size $a$ by a collection of $b$ subsets (will take $b \le r$ in this case). We can split this into cases depending on the number of elements \emph{not} covered by a collection of $b - 1$ subsets. For particular number of missing elements $r$, we set the union of the  $b - 1$ subsets equal to a particular subset of $A$ with $|A| - r$ elements. There are $\binom{|A|}{|A| - r}$ choices for such a subset. Fixing a subset $U \subset A$ with $|A| - r$ elements, we have that $F_r$ can be any subset of $A$ containing the remaining $r$ elements (giving $2^{n - r}$ choices) and the number of choices of (unordered) collections of (nonempty) subsets $F_1, \ldots, F_{r - 1}$ of $A$ whose union is equal to $U$ is $T_{|A| - r, b - 1}$. This gives us the recursive relation \[ T_{a, b} = \sum_{r = 1}^{a - 1} \binom{a}{a - r} 2^{n - r} T_{a - r, b - 1}. \] 
			
			If we know $T_{u, b - 1}$ for all $u$, then we can compute $T_{a, b}$. In other words, we can treat this as induction on the second index $b$ (eventually setting $b = r$). As base cases, we can use $T_{a, 1} = 1$ (single subset equal to $A$). If $b = 2$, this means choosing a (nonempty) subset of $A$ and having the second set contain its complement. For each $m$, there are $\binom{a}{m}$ choices of $m$-element subsets of $A$ and $2^{a - m}$ choices for subsets of $A$ containing the complement of the first subset in $A$. This means that $T_{a, 2} = \sum_{m = 1}^a \binom{a}{m} 2^{a - m}$.  \\
			
			It may also be possible to relate this to \emph{disjoint} covers by some collection of elements. After a disjoint cover, we can add whatever elements of $A$ we want to each of the subsets $F_1, \ldots, F_r$ involved (possibly adding nothing to one or more subsets). After choosing a disjoint cover, this is a matter of choosing any $r$ (possibly empty) subsets of $A$ (which gives $(2^a)^r = 2^{ar}$ choices). By Proposition 2.6 on p. 1032 -- 1033 of \cite{Pos}, the disjoint covers of $A$ by $r$ elements correspond to the $(a - r)$-dimensional faces of the permutohedron $P_a(x_1, \ldots, x_a)$ (for some choice of fixed $x_1 > \cdots > x_a$) formed by the convex hull of the points formed by permuting the coordinates of the point $(x_1, \ldots, x_a)$. 
		\end{itemize}
	\end{proof}

	\begin{prop} \label{addsmcb} ~\\
		Suppose that $M$ is a matroid such that any $r$-tuple of flats $F_1, \ldots, F_r$ satsifies the following property: For any $p$, there is an $x_p$ such that $p \notin F_i \Longrightarrow x_p \notin F_i$. \\
		
		Then, the matroid $M$ satsifies the matroid Cayley--Bacharach property $MCB(r)$. Also, the flats $F_i$ must come from path covers of some directed graph with the paths being maximal among those sharing the same starting point. The lengths of maximal paths bound the ranks of the flats involved. The structure of the graph also gives an upper bound on the number of points involved. 

	\end{prop}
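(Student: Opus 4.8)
The plan is to treat the first assertion---that $M$ satisfies $MCB(r)$---as the logical core, and to read the remaining assertions as a structural description of which flats the hypothesis permits. The one point I would fix at the outset is that the companion $x_p$ must be genuinely distinct from $p$: if $x_p = p$ were allowed, the implication $p \notin F_i \Rightarrow x_p \notin F_i$ becomes vacuous and could not force any Cayley--Bacharach behavior. So I would state explicitly that $x_p \ne p$, which is exactly what makes the condition nontrivial and drives the argument below.

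First I would prove $MCB(r)$ directly, by contradiction. Suppose $F_1, \ldots, F_r$ are flats with $\bigcup_{i=1}^r F_i \supset E \setminus p$, and suppose for contradiction that $p \notin \bigcup_{i=1}^r F_i$, i.e. $p \notin F_i$ for every $i$. Applying the hypothesis to this very $r$-tuple yields an $x_p \ne p$ with $p \notin F_i \Rightarrow x_p \notin F_i$; running this over all $i$ gives $x_p \notin F_i$ for each $i$, hence $x_p \notin \bigcup_{i=1}^r F_i$. But $x_p \ne p$ forces $x_p \in E \setminus p \subset \bigcup_{i=1}^r F_i$, a contradiction. Therefore $p$ is covered and $\bigcup_{i=1}^r F_i = E$, which is precisely $MCB(r)$.

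For the structural claims I would encode the hypothesis as a directed graph $G$ on the vertex set $E$, with an edge $p \to x_p$ for each $p$; here I would read the hypothesis in its natural strengthened form, that $x_p$ depends only on $p$ (and not on the chosen tuple), since otherwise the graph is not globally well defined. As $x_p$ is then a function of $p$, every vertex of $G$ has out-degree one, so the forward orbit of any vertex is a path that eventually enters a cycle---these are the \emph{maximal paths sharing a common starting point}. The defining implication says exactly that each complement $E \setminus F_i$ is closed under following edges of $G$, i.e. forward-closed, so $E \setminus F_i$ is a union of such forward paths. In this sense the flats $F_i$ arise as complements of path unions (``path covers'') in $G$, and the number of points $|E| = n$ is controlled by the tail-and-cycle structure of $G$, giving the advertised bound on the size of the ground set.

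The main obstacle I expect is the rank statement, since the hypothesis only constrains \emph{membership} of elements in flats, whereas ``the lengths of maximal paths bound the ranks of the flats'' is an assertion about the matroid rank function. To make this precise I would argue that excluding a vertex $v$ from a flat forces the exclusion of its entire forward orbit in $G$, so a long maximal path starting at $v$ forces many simultaneous exclusions, and hence a large corank, for any flat missing $v$; this in turn bounds $\rk F_i$ from above. Converting this membership constraint into a clean inequality on $\rk F_i$---rather than merely on the size of $E \setminus F_i$---is the step that requires the most care, and is where I would concentrate the remaining work.
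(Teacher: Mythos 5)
Your proof of the $MCB(r)$ claim is correct and is, in substance, the argument the paper intends but leaves implicit: if $p$ were the unique uncovered point, the hypothesis would force $x_p$ to be uncovered as well, so the union misses at least two points and the implication in $MCB(r)$ is never violated nontrivially. Your insistence that $x_p \ne p$ is a genuine improvement -- the paper never says this, but without it the hypothesis is vacuous and the proposition false, so you have correctly identified the intended reading. Your directed graph is the paper's graph with all edges reversed (the paper puts an edge $i \to j$ when $i$ can serve as $x_j$, so that the flats themselves, rather than their complements, are forward-closed), and your choice to make $x_p$ a function of $p$ gives the cleaner out-degree-one, tail-plus-cycle picture, whereas the paper keeps all admissible witnesses and consequently gets a vaguer description in terms of ``maximal paths entering other maximal paths.'' These are presentational differences, not mathematical ones. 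The one place you stop short -- converting the forced exclusions along forward orbits into an actual bound on $\rk F_i$ -- is also the place where the paper stops short: its proof ends by asserting that ``a particular graph gives upper bounds for the ranks'' with no more justification than you give, presumably because a path constraint bounds $|F_i|$ or $|E \setminus F_i|$ and hence $\rk F_i \le |F_i|$. So your honest flagging of that step as the remaining work is an accurate diagnosis of the proposition as stated, not a defect of your approach relative to the paper's.
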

	
	\begin{proof}
		A special case where $sMCB(r)$ is satisfied is the case where $p$ \emph{not} being contained in a subset $F_i \subset E$ among $F_1, \ldots, F_r$ means that there is some $x_p \in E$ such that $x_p \notin F_i$. This is equivalent to the statement that $x_p \in F_i \Longrightarrow p \in F_i$. Note that the choice of $x_p$ might not necessarily be unique. Then, we can build a directed graph with an edges $i \longrightarrow j$ if and only if we can set $i = x_j$. Since the $sMCB(r)$ condition is \emph{not} affected by situations where $|\bigcup_{i = 1}^r F_i| \le n - 2$, we will restrict ourselves to the situation where $\bigcup_{i = 1}^r F_i = E$. This means that the graphs under consideration are those that involve \emph{all} the elements of $\{ 1, \ldots, n \}$. \\
		
		Each vertex corresponds ot an element of $E$. Note that the directed graphs which arise aren't completely arbirtrary. Split the graph into connected components of the underlying undirected graph. Fix a maximal directed path going in one direction. Then, any remaining vertices (which correspond to elements of $E$) must come from paths that \emph{enter} the maximal directed path at a vertex whcih is \emph{not} an endpoint since joining the new paths at such points would contradict the maximality assumption. Given a particular possible connected graph, the subsets $F_i$ of $E$ must come from paths which keep going until we encounter a loop. In other words, we are looking for paths which are maximal among those with the same starting point. This means that $sMCB(r)$ is equivalent to determining possible covers of directed graphs by such paths. As a consequence of this construction, we find that a particular graph gives upper bounds for the ranks of a collection of $r$ flats which cover $E$. 
	\end{proof}
	
	\subsection{Special case of graphs} \label{graphmcb} 
	
	We consider the case where the sets in question are disjoint and $M$ is a graphic matroid. Since the flats of a direct sum of matroids $M_1 \oplus \cdots \oplus M_r$ with disjoint underlying sets $E_1, \ldots, E_r$ are of the form $F_1 \cup \cdots \cup F_r$ for flats $F_i$ of $M_i$ (p. 125 of \cite{Ox}), we can think about this as the case where $M_1 = \cdots = M_r = M$ for some matroid $M$. Note that the underlying set of the matroid $M_1 \cup \cdots \cup M_r$ is $E_1 \cup \cdots \cup E_r$, where $E_i$ is the underlying set of $M_i$. For $M^{\oplus r}$, this means taking the copies of the underlying set $E$ of $M$ to be disjoint from each other. When the direct sum is a graphic matroid, the flats and closures have a simple interpretation. \\
	
	A result of Lov\'asz--Recski \cite{LR} indicates when a repeated direct sum is a graphic matroid. \\
	
	\begin{thm} \label{dirsumgraph} (Theorem 2 on p. 332 of \cite{LR}) \\
		Given a matroid $M$ with underlying set $S$, we call it a \textbf{$k$-circuit} if $|S| = k r(S) + 1$ and $|T| = k r(T)$ for all $T \subset S$. A repeated matroid direct sum $M^{\oplus k}$ is a graphic matroid if and only if any two $k$-circuits of $(S, M)$ are disjoint. \\
	\end{thm}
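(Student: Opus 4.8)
The plan is to read graphicness through Tutte's excluded-minor characterization and to give the counting conditions defining a $k$-circuit a structural meaning via matroid union. A subset $C \subseteq S$ with $|C| = k\,r(C)+1$ and $|T| = k\,r(T)$ for every proper $T \subsetneq C$ is exactly a minimal set that cannot be written as a union of $k$ independent sets of $M$; by the Nash--Williams union theorem (see \cite{Ox}) such sets are the circuits of the $k$-fold sum, and the relevant rank function is
\[ r_{M^{\oplus k}}(A) \;=\; \min_{T \subseteq A}\bigl(\,|A \setminus T| + k\,r(T)\,\bigr). \]
So the first step is to record this formula and to observe that the sets attaining equality $|T| = k\,r(T)$ (the \emph{tight} sets) are closed under intersection and union, hence form a lattice, with the $k$-circuits sitting minimally above it. This lattice is the bookkeeping device that governs how two $k$-circuits may share elements, and it is what I would use throughout.

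For the direction ``graphic $\Rightarrow$ pairwise disjoint $k$-circuits'' I would argue contrapositively. Suppose two $k$-circuits $C_1, C_2$ meet. I would localize to $C_1 \cup C_2$, contract the maximal tight set contained in the intersection, and delete the elements that are free after contraction; the counting identities for $C_1$ and $C_2$ then pin down the isomorphism type of the resulting minor to a short list of small matroids on which the two overlapping circuits collide. The aim is to show that at least one member of this list is among the five Tutte obstructions $U_{2,4}$, $F_7$, $F_7^*$, $M^*(K_5)$, $M^*(K_{3,3})$ --- most plausibly $U_{2,4}$ or a cographic Kuratowski minor --- which by \cite{Ox} forces $M^{\oplus k}$ to be non-graphic, the desired contradiction.

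For the converse, ``pairwise disjoint $k$-circuits $\Rightarrow$ graphic'', the plan is to build a realizing graph explicitly. Disjointness of the $k$-circuits makes the tight-set lattice split: the ground set decomposes into blocks that interact only through coloop-type (free) elements, so $M^{\oplus k}$ is a $1$-sum of pieces each supported on a single $k$-circuit together with an independent remainder. Each such piece is small enough to be checked graphic directly, and since graphicness is preserved under $1$-sums (one-point joins and disjoint unions of graphs), gluing the local graphs produces a graph whose cycle matroid is all of $M^{\oplus k}$.

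I expect the forward direction to be the main obstacle: the delicate point is proving that a \emph{shared} element between two $k$-circuits \emph{necessarily} produces one of the five excluded minors, rather than some dependent but still graphic configuration. This needs a careful case analysis of how the two tight filtrations overlap, together with a verification that the overlap cannot be ``unwound'' by additional contractions into a series--parallel (hence graphic) piece. A secondary difficulty is confirming, in the converse, that the decomposition along disjoint $k$-circuits is genuinely a $1$-sum and introduces no circuits beyond those internal to the pieces.
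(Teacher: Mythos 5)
The paper does not prove this statement: it is quoted (as Theorem 2 of Lov\'asz--Recski \cite{LR}) and used as a black box, so there is no internal proof to compare your attempt against, and your proposal must be judged on its own. You get the setup right in one important respect: for the statement to be the Lov\'asz--Recski theorem at all, $M^{\oplus k}$ must be read as the Nash--Williams \emph{union} of $k$ copies of $M$ on the same ground set, with exactly the rank formula you wrote down --- not as a direct sum of disjoint copies, which is graphic if and only if $M$ is, making the $k$-circuit condition irrelevant. Your identification of the $k$-circuits with the circuits of that union is correct (with $|T| \le k\,r(T)$ on proper subsets, as in \cite{LR}; the equality in the quoted statement cannot be meant literally, since it already fails on singletons for $k \ge 2$). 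Your converse can also be made to work, and in fact more simply than you propose: if the circuits of the union are pairwise disjoint, each connected component is a single coloop or a single circuit $C$ on which the matroid restricts to $U_{|C|-1,|C|}$, so the union is a direct sum of cycle matroids and a free matroid and is graphic outright; no tight-set lattice or $1$-sum gluing is needed.

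The genuine gap is the forward direction, which you have correctly located but not closed. The assertion that two intersecting $k$-circuits force one of Tutte's five excluded minors after contracting a tight set \emph{is} the content of the theorem, and ``pin down the isomorphism type to a short list of small matroids'' is not an argument: no list is produced and no case is checked. A concrete warning that the sketch cannot be completed as written: nothing in your outline uses $k \ge 2$, yet for $k = 1$ the claimed equivalence is false --- the triangles of $M(K_4)$ are pairwise intersecting $1$-circuits, but $M(K_4)$ is graphic. So any correct analysis of the minor on $C_1 \cup C_2$ must use $k \ge 2$ quantitatively (for instance through the surplus $|C_1 \cup C_2| \ge k\,r(C_1 \cup C_2) + 2$ that your counting identities yield for distinct intersecting $k$-circuits), and that step is entirely absent. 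Until it is supplied, the forward implication is unproven.
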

	
	We can interpret a union of $r$ flats of a matroid as a single face $F$ of $M^{\oplus r}$. Let $E^i$ be the copy of $E$ in the $i^{\text{th}}$ copy $M_i$ of $M$. Let $A$ be a subset of $E^1 \cup \cdots \cup E^r$ such that removing the labels $i$ gives the full subset $E$. This corresponds to some disjoint union of $r$ sets $A_1, \ldots, A_r \subset E$ whose union is equal to $E$. Then, a variant of $MCB(r)$ can be phrased as the statement that $F \supset A \setminus \{ p \} \Longrightarrow F \supset A$. Since flats are the sets preserved under the closure operation, the first statement implies that $F \supset \overline{E \setminus \{ p \}}$. If $\overline{A \setminus \{ p \}} = A$, then we have the desired conclusion. Under the conditions of Theorem \ref{dirsumgraph}, the resulting direct sum matroid is a graph. Then, the condition that $\overline{A \setminus \{ p \} } = A$ is equivalent to the statement that for any $p \in A$, the endpoints of $p$ are connected by a path in $A \setminus \{ p \}$. In other words, the subgraph of $M^{\oplus r}$ induced by $A$ is $2$-connected. \\
	
	The reason why we stated that the above is a ``variant'' is that the ``distribution'' of $E$ over the different flats $F_1, \ldots, F_r$ in the $MCB(r)$ condition can vary. This means that we need to have the condition satisfied for \emph{all} possible $A$ satisfying the given condition. Also, note that the $MCB(r)$ condition is satisfied if the defining statement holds for all \emph{minimal} flats $F_1, \ldots, F_r$. Then, flats that are minimal under inclusion among those which can be used to give a union of $r$ flats covering $E \setminus \{ p \}$ for $p \in E$. Putting everything together, the observations above can be summarized as follows: \\
	
	\begin{prop} \label{dirgraph}
		Let $M$ be a graphic matroid with underlying set $E$ such that any two $r$-circuits of $M$ are disjoint and $r$-tuples of flats which are minimal among those covering single point complements $E \setminus \{ p \}$ are disjoint. Then, the degree $r$ matroidal Cayley--Bacharach condition is equivalent to the statement that the subgraph of $M^{\oplus r}$ induced by any subset $A \subset E^1 \cup \cdots \cup E^r$ giving a partition of $E$ as the disjoint union of $r$ subsets yields a $2$-connected subgraph of $M^{\oplus r}$. \\
	\end{prop}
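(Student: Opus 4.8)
The plan is to assemble the translation steps sketched in the paragraphs preceding the statement into a single chain of equivalences, using Lov\'asz--Recski to move the problem into the graphic matroid $M^{\oplus r}$ and then reading off the matroidal closure operation as graph connectivity.

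First I would record that the hypothesis that any two $r$-circuits of $M$ are disjoint is exactly what lets Theorem \ref{dirsumgraph} apply, so that $M^{\oplus r}$ is itself a graphic matroid and its induced subgraphs, paths, and bridges are meaningful. Next I would set up the dictionary between the two pictures. By the description of flats of a direct sum (p. 125 of \cite{Ox}), each flat $F$ of $M^{\oplus r}$ splits as $F = F_1 \cup \cdots \cup F_r$ with $F_i$ a flat of the $i$-th copy of $M$, and conversely an $r$-tuple $(F_1, \ldots, F_r)$ of flats of $M$ assembles into one flat of $M^{\oplus r}$. Under this correspondence a labelled set $A \subset E^1 \cup \cdots \cup E^r$ whose image after deleting copy-labels is all of $E$ is exactly a partition $E = A_1 \sqcup \cdots \sqcup A_r$, where $A_i$ is the part of $E$ assigned to the $i$-th flat. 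The second disjointness hypothesis, on the $r$-tuples minimal among those covering some $E \setminus \{p\}$, is precisely what guarantees that the covers arising in $MCB(r)$ lift to \emph{genuine} partitions, so that the two descriptions are interchangeable.

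The core step is the closure-to-connectivity translation. I would reduce $MCB(r)$ to the implication $F \supset A \setminus \{p\} \Rightarrow F \supset A$ for every admissible partition $A$ and every $p \in A$; since a flat is closed, $F \supset A \setminus \{p\}$ forces $F \supset \overline{A \setminus \{p\}}$, so the implication holds exactly when $\overline{A \setminus \{p\}} = A$. In the graphic matroid $M^{\oplus r}$ the closure of a set of edges adjoins those edges whose endpoints lie in a common component, so $\overline{A \setminus \{p\}} = A$ says that the endpoints of each edge $p$ of the subgraph induced by $A$ are joined by a path in $A \setminus \{p\}$, i.e. that this subgraph has no bridge and is $2$-connected. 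Running this through both directions gives the equivalence: a failure of $2$-connectedness for some $A$ produces the flat $F = \overline{A \setminus \{p\}}$ witnessing a failure of $MCB(r)$, while $2$-connectedness of every induced subgraph forces $p \in \overline{A \setminus \{p\}} = F$ whenever the covering hypothesis holds.

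The main obstacle, and where the care is needed, is the ``variant'' bookkeeping flagged in the text: the $MCB(r)$ condition ranges over all ways of distributing the points of $E$ among the $r$ flats, so the connectivity condition must be verified for \emph{every} admissible partition $A$ simultaneously, not just for one. I would handle this by checking that it suffices to test the minimal flats $F_1, \ldots, F_r$ (those minimal under inclusion among covers of a single-point complement): any larger cover contains a minimal one and hence inherits the conclusion, so the quantifier over all $A$ reduces to the minimal configurations, whose disjointness is supplied by hypothesis. Assembling these reductions closes both implications and yields the stated equivalence.
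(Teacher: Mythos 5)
Your proposal follows essentially the same route as the paper's own discussion: invoke Lov\'asz--Recski to realize $M^{\oplus r}$ as a graphic matroid, encode an $r$-tuple of flats of $M$ as a single flat $F$ of the direct sum and a distribution of $E$ as a labelled set $A$, translate the covering implication $F \supset A \setminus \{p\} \Rightarrow F \supset A$ into the closure condition $\overline{A \setminus \{p\}} = A$, read that off as the path/bridge condition on the subgraph induced by $A$, and reduce the quantifier over all distributions to the minimal covers whose disjointness is assumed. The only difference is that you make the converse direction explicit (a failure of connectivity yields the witnessing flat $\overline{A \setminus \{p\}}$), which the paper leaves implicit but which is a faithful elaboration rather than a different argument.
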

	
	\begin{rem} ~\\
		\vspace{-3mm}
		\begin{enumerate}
			\item Since the objects used to define a matroid are often analogous to those used to define topological spaces, we can study what statements can be extended to higher dimensional objects. If we continue the assumption that the minimal $r$-covers by flats are disjoint, the exact argument above applies. If we remove this disjointness condition and only consider the matroid $M$ itself instead of disjoint sums, we need to consider unions of $r$ flats, which aren't necessarily flats. This complicates the argument above. \\
			
			\item If the definition of a matroid also defines a topological space (e.g. the case of uniform matroids $U_{n, n}$), we have that $\bigcup_{i = 1}^r F_i$ is a flat if $F_1, \ldots, F_r$ are flats. This means that $\bigcup_{i = 1}^r F_i \supset E \setminus \{ p \} \Longrightarrow \overline{\bigcup_{i  = 1}^r F_i} \supset \overline{E \setminus \{ p \} } $. Since we're working with the uniform matroid $U_{n, n}$, we have that $\overline{\bigcup_{i = 1}^r F_i} = \bigcup_{i = 1}^r F_i$ and $\bigcup_{i = 1}^r F_i \supset \overline{E   \setminus \{ p \} } $. Then, it suffices to show that $\overline{E \setminus \{ p \} } = E$. This is \emph{not} the case if and only if $\overline{E \setminus \{ p \} } = E \setminus \{ p \}$. In particular, this means that $\rk(E) = \rk ( E \setminus \{ p \} ) + 1$ and $E \setminus \{ p \}$ is a hyperplane of $M$. If $A \subset E \setminus \{ p \}$, then it cannot be a basis element of $M$ since it must be of maximal rank (i.e. has rank $\rk(E)$). This means that any basis element must be of the form $R \cup \{ p \}$ for some $R \subset E \setminus \{ p \}$. Then, we have that $\rk(R \cup \{ p \}) \le \rk(E \setminus \{ p \} ) + 1$. \\ 
		\end{enumerate}

	\end{rem}

Department of Mathematics, University of Chicago \\
5734 S. University Ave, Office: E 128 \\ Chicago, IL 60637 \\
\textcolor{white}{text} \\
Email address: \href{mailto:shpg@uchicago.edu}{shpg@uchicago.edu} 

\begin{thebibliography}{8}
		
		
		\bibitem{ABD} F. Ardila, C. Benedetti, and J. Doker, Matroid Polytopes and their Volumes, Discrete \& Computational Geometry 43 (2010), 841 -- 854.
		
	
		\bibitem{AK} F. Ardila and C. J. Klivans, The Bergman complex of a matroid and phylogenetic trees, Journal of Combinatorial Theory, Series B 96 (2006), 38 -- 49.
		
		
		
		\bibitem{CL} F. Castillo and F. Liu, Deformation cones of nested braid fans, International Mathematics Research Notices 3 (2022), 1973 -- 2026. 
		
		
		
		\bibitem{Dl} M. Dlugoch, New light on Bergman complexes by decomposing matroid types, FPSAC 2012, Nagoya, Japan, DMTCS proc. AR (2012), 181 -- 190. 
		
		\bibitem{Dok} J. S. Doker, Geometry of Generalized Permutohedra, UC Berkeley PhD Thesis (2011).
		
		
		
		\bibitem{FS} E. M. Feichtner and B. Sturmfels, Matroid polytopes, nested sets and Bergman fans, Portugaliae Mathematica 62(4) (2005), 437 -- 468.
		
		
		
		
		\bibitem{GGMS} I. M. Gelfand, R. M. Goresky, R. D. MacPherson, and V. V. Serganova, Combinatorial Geometries, Convex Polyhedra, and Schubert Cells, Advances in Mathematics 63 (1987), 301 -- 316.
		
		
		
		\bibitem{Ki} S. Kim, Flag enumerations of matroid base polytopes, Journal of Combinatorial Theory, Series A 117 (2010), 928 -- 942.
		
		
		
		\bibitem{LU} J. Levinson and B. Ullery, A Cayley-Bacharach theorem and plane configurations, to appear in Proceedings of the American Mathematical Society, \url{https://arxiv.org/pdf/2102.08525.pdf}
		
		
		
		\bibitem{LR} L. Lov\'asz and A. Recski, On the sum of matroids, Acta Mathematica Scientiarum Hungaricae Tomus 24 (3 -- 4), 329 -- 333.
		
		
		
		\bibitem{Ox} J. G. Oxley, Matroid Theory, Oxford University Press (2006).
		
	
		
		\bibitem{Pos} A. Postnikov, Permutohedra, Associahedra, and Beyond, International Mathematics Research Notices 6 (2009), 1026 -- 1106.
		
		
		
		\bibitem{Zie} G. M. Ziegler, Lectures on polytopes, Vol. 152, Springer Science \& Business Media (2012).
		
	\end{thebibliography}
\end{document}